\documentclass[11pt]{scrartcl}
\usepackage[utf8]{inputenc}
\RequirePackage[T1]{fontenc}

\usepackage{amssymb}
\usepackage{microtype}
\usepackage{amsmath}
\usepackage{amssymb}
\usepackage{amsfonts}
\usepackage{mathtools}
\usepackage[hyphens]{url}
\usepackage{amsthm}
\usepackage{thmtools}
\usepackage[mathscr]{euscript}
\usepackage{thm-restate}
\usepackage{dsfont}

\usepackage{tikz}
\usetikzlibrary{calc}
\usetikzlibrary{hobby}
\usepackage{xcolor}
\usepackage{subcaption}
\pgfdeclarelayer{background}
\pgfdeclarelayer{foreground}
\pgfsetlayers{background,main,foreground}

\usepackage{stmaryrd}
\usepackage{hyperref}
\usepackage{cleveref}

\usepackage{nicefrac}
\usepackage{textpos}

\usepackage{graphicx}
\usepackage{titling}
\usepackage{xspace}
\usepackage{macros}

\usepackage{enumitem}

\hypersetup{
colorlinks=true,
linkcolor=AO!65!black,
citecolor=AO!65!black,
urlcolor=AppleGreen!65!black,
bookmarksopen=true,
bookmarksnumbered,
bookmarksopenlevel=2,
bookmarksdepth=3
}

\title{Counterexample to Babai's lonely colour conjecture}
\predate{}
\date{}
\postdate{}

\preauthor{}
\DeclareRobustCommand{\authorthing}{
    \begin{center}
        James Davies\thanks{Trinity Hall, University of Cambridge, United Kingdom, 
        \href{mailto:jgd37@cam.ac.uk}{jgd37@cam.ac.uk}} \hspace{1cm}
	    Meike Hatzel\thanks{Discrete Mathematics Group, Institute for Basic Science (IBS), Daejeon, South Korea, \href{mailto:research@meikehatzel.com}{research@meikehatzel.com}. Research was supported by the Federal Ministry of Education and
        Research (BMBF) and by a fellowship within the IFI programme of the German Academic Exchange Service (DAAD) and by the Institute for Basic Science (IBS-R029-C1).} \hspace{1cm}
	    Liana Yepremyan\thanks{Department of Mathematics, Emory University, \href{mailto: lyeprem@emory.edu}{lyeprem@emory.edu}.  Research is supported by the National Science Foundation grant 2247013: Forbidden and Colored Subgraphs.}
\end{center}}
\author{\authorthing}
\postauthor{}

\newif\ifcomment
\commentfalse
\commenttrue

\setlength{\parindent}{0pt}
\setlength{\parskip}{2pt}

\begin{document}

\maketitle

\begin{abstract}
    Motivated by colouring minimal Cayley graphs, in 1978, Babai conjectured that no-lonely-colour graphs have bounded chromatic number.
    We disprove this in a strong sense by constructing graphs of arbitrarily large girth and chromatic number that have a proper edge-colouring in which each cycle contains no colour exactly once.
\end{abstract}

\begin{textblock}{20}(11.5,6)
   \includegraphics[width=80px]{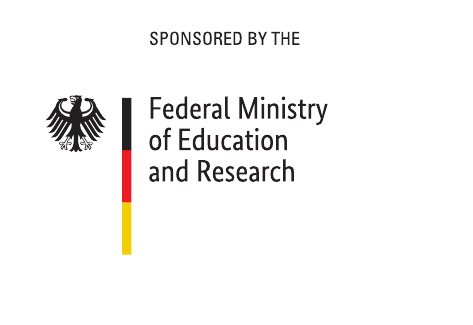}%
\end{textblock}

\section{Introduction}

A graph $G$ is a \emph{\nlcg} if there is an edge-colouring of $G$ such that
\begin{itemize}
    \item every vertex is incident with at most two edges of any colour, and
    \item each cycle contains no colour exactly once.
\end{itemize}

In 1978, Babai~\cite{babai1978chromatic} conjectured that no-lonely-colour graphs have bounded chromatic number.
Very recently, Garcia-Marco and Knauer~\cite{garcia2024colouring} made the weaker conjecture that graphs admitting a proper edge-colouring in which each cycle contains no colour exactly once have bounded chromatic number.
We disprove both of these conjectures in a strong sense; our graphs can also have arbitrarily large girth.

\begin{restatable}{theorem}{constructNoLonelyColourGraphsTheorem}
    \label{thm:no_lonely_colour_graphs_large_chromatic_number}
    For every pair of positive integers $g,k$, there is a graph $G_{g,k}$ with girth at least $g$ and chromatic number at least $k$ that has a proper edge-colouring in which each cycle contains no colour exactly once.
\end{restatable}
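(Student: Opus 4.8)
The plan is to first re-read the colouring condition in a more workable form, then to see why the ``obvious'' attempts cap at chromatic number $3$, and finally to build $G_{g,k}$ by an amalgamation that raises the chromatic number one unit at a time while controlling girth and extending the colouring.

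\emph{Reformulation.} A proper edge-colouring $\phi$ of $G$ has no cycle in which some colour occurs exactly once if and only if, for every colour class $M=\phi^{-1}(c)$ (necessarily a matching), every edge $uv\in M$ joins two \emph{distinct} components of $G-M$: indeed, $c$ occurs exactly once on a cycle through $uv$ precisely when there is a $u$--$v$ path in $G$ avoiding $M$. It is convenient to repackage this: such a colouring is the same thing as a homomorphism $\Phi\colon G\to K_{n_1}\mathbin{\square}\cdots\mathbin{\square}K_{n_t}$ into a Cartesian product of complete graphs which sends each edge of $G$ to an edge of the product (adjacent vertices of $G$ differ in exactly one coordinate) and which is \emph{locally coordinate-injective} (at each vertex, the incident edges change pairwise different coordinates); the colour of an edge is the coordinate its endpoints disagree in. In this language ``no lonely colour'' is the triviality that along any closed walk in a Cartesian product of complete graphs each coordinate is changed a number of times other than $1$, while properness is exactly local coordinate-injectivity. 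I would run the construction in this Hamming language.

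\emph{Why $\chi=3$ is easy and why we cannot stop there.} If in addition every colour class is an \emph{edge cut}, i.e.\ meets every cycle evenly, then $\Phi$ above can be taken with all $n_i=2$, so $G$ maps to a hypercube and is bipartite; hence any scheme in which all cycle-intersections are even is stuck at $\chi\le 2$. To get past $\chi=3$ one must use colour classes hitting some cycle an odd number $\ge 3$ of times. The cheapest such gadget is a long odd cycle: $C_{2m+1}$ with $m\ge 3$ has a proper $3$-edge-colouring whose classes have sizes $(m,m-1,2)$, all at least $2$, and so is already a graph $G_{2m+1,3}$ (in the Hamming picture it sits coordinate-injectively inside $K_3\mathbin{\square}K_3\mathbin{\square}K_3$). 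So girth is never the obstruction for $k\le 3$; the content of the theorem is raising $k$.

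\emph{The recursion on $k$, and the main obstacle.} Assuming $G_{g,k-1}$ has been produced together with a locally coordinate-injective homomorphism into a product of complete graphs, I would get $G_{g,k}$ by an amalgamation in the spirit of the classical chromatic-number-boosting constructions: take a large independent ``apex'' set $U$, and for suitably many subsets $S\subseteq U$ attach a fresh copy $G^{S}$ of $G_{g,k-1}$, joining $G^{S}$ to $S$ through connector gadgets. The connectors should be long and internally path-like, so that every cycle of $G_{g,k}$ either lies inside a single copy $G^{S}$ (girth $\ge g$ by induction) or runs through at least two connectors and is therefore long; and they should be trivially colourable — a long path coloured $a,b,a,b,\dots$ adds only two fresh colours, creates no lonely colour, and, being just a geodesic in a new two-dimensional Hamming factor, extends the homomorphism automatically. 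The family of subsets $S$ and the connector lengths are then to be chosen so that any proper $(k-1)$-colouring of $G_{g,k}$ is forced to produce a monochromatic edge between some copy $G^{S}$ and its apices, giving $\chi(G_{g,k})\ge k$, and the final homomorphism is assembled from the one for the copies together with the connectors' new coordinates. The entire difficulty is concentrated here, and the two demands pull against each other: forcing the chromatic number up by a counting argument wants the apices tied to the copies by \emph{short, rigid} links, whereas keeping girth above $g$ and being able to extend the colouring wants those links \emph{long and freely subdividable} — and with long links the naive forcing argument simply fails, since a long connector can absorb a colour clash. Designing connector gadgets that are simultaneously long (hence girth-safe), compatible with the Hamming colouring, and still ``chromatically rigid'' enough that the forcing survives subdivision — presumably via a parity device on the connector lengths together with taking $U$ and the family of subsets very large — is the heart of the proof, and it is exactly here that the known high-girth high-chromatic constructions would have to be reworked to carry the extra edge-colouring.
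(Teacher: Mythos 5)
Your write-up is a plan with the decisive step explicitly left open, so it does not yet constitute a proof. The Hamming reformulation at the start is correct and worth having (a locally coordinate-injective homomorphism into a Cartesian product of complete graphs is indeed equivalent to a proper edge-colouring with no lonely colour on any cycle), and the general shape you propose --- an apex independent set with copies of $G_{g,k-1}$ attached, \`a la Tutte/Descartes --- is the right skeleton. But you then concede that the ``forcing wants short rigid links, girth wants long subdividable links'' tension is unresolved, and that concession is exactly where the theorem lives. Your proposed resolution (long two-coloured path connectors plus ``a parity device'') points in the wrong direction: with long connectors the Tutte counting argument for $\chi\ge k$ does fail, as you observe, and no amount of parity bookkeeping on connector lengths recovers it.

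The construction in the paper resolves the tension by two ideas that are absent from your proposal. First, the connectors are kept as short and rigid as possible --- single perfect matchings between the apex set $T$ and the copies --- and girth is instead imported from the \emph{auxiliary incidence structure}: the subsets $S$ along which copies are attached are taken to be the hyperedges of an $r$-uniform hypergraph $H$ of girth $\ge\lceil g/3\rceil$ and chromatic number $\ge k$, so every cycle of $G_{g,k}$ not inside one copy projects to a Berge closed walk in $H$ of length $\ge\lceil g/3\rceil$, each step of which costs at least $3$ edges. Second --- and this is the heart of the matter, which your plan never touches --- one must ensure that an edge $e$ inside a copy is never the unique edge of its (inductively inherited) colour on a cycle $C$ of $G_{g,k}$. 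This requires the matchings to be wired so that the segments of $C$ inside the various copies, viewed back in $G_{g,k-1}$ via the labellings of the hyperedges, assemble into a bridgeless (Eulerian) multigraph containing $e$ on a cycle; induction then supplies a second edge of the same colour. Hypergraphs admitting such labellings (called \emph{tranquil} in the paper, \cref{thm:tranquil_hypergraphs}) are not generic: they are built from the arithmetic of Gallai/van der Waerden configurations $\{x+re_1,\dots,x+re_d\}$ in $\mathds{Z}^d$, where the bridgelessness of the projection multigraph is a conservation-law computation, and large girth is then obtained from the Pr\"omel--Voigt sparse Gallai theorem. Without an explicit substitute for this tranquillity mechanism, your amalgamation cannot be shown to preserve the no-lonely-colour property, so the gap is essential rather than technical.
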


We remark that in 1995, Babai~\cite{babai1995automorphism} also conjectured the opposite: the family of no-lonely-colour graphs has unbounded chromatic number. Thus, \cref{thm:no_lonely_colour_graphs_large_chromatic_number} also proves this conjecture.
\cref{thm:no_lonely_colour_graphs_large_chromatic_number} also strengthens a recent theorem of Garcia-Marco and Knauer~\cite{garcia2024colouring} that there are graphs with large chromatic number that have a proper edge-colouring in which no cycle contains each colour at most once.

Babai's~\cite{babai1978chromatic} conjecture was motivated by colouring minimal Cayley graphs. Given a group $\Gamma$ and $S\subseteq \Gamma$, the (undirected) \emph{Cayley}  graph is defined to be a graph on the vertex set $\Gamma$ and $ab$ is an edge whenever $ab^{-1}$ or $ba^{-1}$ is in $S$.
We say $S$ is a \emph{minimal} generating set for $\Gamma$ if $S$ generates $\Gamma$ and no proper subset of it does, in this case we call $\text{Cay}(\Gamma, S)$ a minimal Cayley graph. 

Aspects of minimal Cayley graphs and properties distinguishing them from general Cayley graphs has been extensively studied (see for instance \cite{babai1978chromatic,babai1995automorphism,farrokhi2016groups,garcia2024colouring,godsil1981connectivity,li2001isomorphisms,miklavivc2020minimal,slupik2009minimal,spencer1983s}).
One can observe that every minimal Cayley graph is a no-lonely-colour graph, and thus Babai's conjecture would have implied the following conjecture which remains open as the graphs we construct are not Cayley graphs.
\begin{restatable}[Babai~\cite{babai1978chromatic}]{conjecture}{Babaiconjecture}
    There is a constant $c$ such that every minimal Cayley graphs has chromatic number at most $c$.
\end{restatable}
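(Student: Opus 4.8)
The plan is to prove the conjecture by induction on $|\Gamma|$, assembling the desired bounded colouring of a minimal Cayley graph from colourings of minimal Cayley graphs of smaller groups, with the finite simple groups as the base case. Two preliminary points frame the approach. First, the no-lonely-colour property is on its own useless here: by \cref{thm:no_lonely_colour_graphs_large_chromatic_number} it is consistent with arbitrarily large chromatic number, so the argument must use that $G$ is a minimal \emph{Cayley} graph, not merely a no-lonely-colour graph. Second, these graphs are not sparse --- a minimal generating set has size up to $\log_2|\Gamma|$, so the maximum degree can be $\Theta(\log|\Gamma|)$ --- so there is no greedy colouring to fall back on. The intuition comes from two extremes. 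For $\Gamma$ abelian, the structure theorem together with minimality forces each generator to act, up to controlled corrections, on a single invariant factor, so that $\mathrm{Cay}(\Gamma,S)$ is essentially a Cartesian product of cycles and single edges and hence $3$-colourable; already here $|S|$ can be unbounded, as for $\mathrm{Cay}(\mathbb Z_n,\{n/p_1,\dots,n/p_m\})$ with $n=p_1\cdots p_m$, which is the Cartesian product of the cycles $C_{p_1},\dots,C_{p_m}$. At the other extreme, $\mathrm{Cay}(S_n,\{(1\,2),(2\,3),\dots,(n-1\,n)\})$ is bipartite, although $S_n$ has order $n!$ and this generating set has size $n-1$. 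So minimality is a strong constraint, and the programme is to show that it forces a product-like structure over pieces of bounded chromatic number.

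One convenient reduction: the quotient map $\Gamma\to\Gamma/\Phi(\Gamma)$ carries a minimal generating set to a minimal generating set (no generator lies in the Frattini subgroup, and distinct generators remain distinct and nontrivial modulo it) and induces a graph homomorphism $\mathrm{Cay}(\Gamma,S)\to\mathrm{Cay}(\Gamma/\Phi(\Gamma),\overline S)$, so one may assume $\Phi(\Gamma)=1$. For the inductive step, choose a proper nontrivial normal subgroup $N\trianglelefteq\Gamma$ (if none exists, $\Gamma$ is simple, the base case) and split $S=S_0\sqcup S_1$ with $S_0=S\cap N$. Partitioning $\Gamma$ into cosets of $N$ realises $\mathrm{Cay}(\Gamma,S)$ as a \emph{fibration} over $\mathrm{Cay}(\Gamma/N,\overline{S_1})$ in which each fibre is a disjoint union of copies of $\mathrm{Cay}(\langle S_0\rangle,S_0)$; colouring a vertex by the pair (colour of its fibre in the quotient, colour of the vertex within its fibre) then gives
\[
  \chi\bigl(\mathrm{Cay}(\Gamma,S)\bigr)\ \le\ \chi\bigl(\mathrm{Cay}(\Gamma/N,\overline{S_1})\bigr)\cdot\chi\bigl(\mathrm{Cay}(\langle S_0\rangle,S_0)\bigr).
\]
The second factor is under control: $S_0$ is automatically a minimal generating set of $\langle S_0\rangle$ (a generator redundant in $S_0$ would be redundant in $S$), and $\langle S_0\rangle\le N$ is a proper subgroup, so induction applies. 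The first factor is where the difficulty lies: $\overline{S_1}$ generates $\Gamma/N$ but need not be a minimal generating set of it. The goal is therefore to choose $N$ so that $\overline{S_1}$ is minimal --- ideally with $\Gamma/N$ abelian, so the first factor reduces to the abelian case, or nilpotent, or simple --- which requires understanding precisely when the image of a minimal generating set remains minimal.

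For the base case, $\Gamma$ is a finite simple group (and, once minimal normal subgroups and Schur multipliers have been tracked through the induction, a quasisimple group), and one invokes the classification. The sporadic groups form a finite list. The substantive families are those in which minimal generating sets are genuinely large --- the alternating groups and the classical groups of large rank, where $|S|$ can be $\Theta(n)$. Here I would exploit the concrete combinatorics of the generators rather than the colouring: a minimal set of transpositions or $3$-cycles generating $A_n$ is essentially a spanning forest, and a minimal set of reflection- or root-type generators of a classical group is organised by a Dynkin diagram, and from this one should be able to show that the Cayley graph again factors through a bounded-chromatic-number structure, for instance that it is almost bipartite via a sign, determinant, or spinor-norm homomorphism. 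Bounded-rank groups over bounded fields reduce to finitely many bounded-degree graphs, and the bounded-rank, unbounded-field cases have to be folded into the same structural analysis.

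The main obstacle is obtaining a \emph{uniform} constant rather than a bound that grows with the composition length of $\Gamma$: the fibration inequality is multiplicative, so iterating it over a chief series only yields something like $|\Gamma|^{o(1)}$. To do better, the colourings of the successive factors must be forced to align and combine additively --- exactly as a Cartesian product of bipartite graphs is bipartite rather than $2^{\mathrm{depth}}$-chromatic --- and this, I think, is precisely the feature separating minimal Cayley graphs from the large-girth, large-chromatic-number no-lonely-colour graphs of \cref{thm:no_lonely_colour_graphs_large_chromatic_number}; capturing it seems to require a genuine structural understanding of how a minimal generating set distributes across a chief series. A secondary obstacle is the point noted above --- guaranteeing a normal subgroup along which the quotient generating set stays minimal, equivalently controlling arbitrary quotients of minimal Cayley graphs --- and a third is the large-rank simple groups, where one cannot appeal to bounded degree or to a proper quotient and the argument must come entirely from the internal structure of their minimal generating sets.
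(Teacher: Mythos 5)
This statement is not a theorem of the paper at all: it is Babai's conjecture, which the paper restates and explicitly leaves open (the authors note that their construction does not settle it because the graphs $G_{g,k}$ are not Cayley graphs, and the best known lower bound is a minimal Cayley graph of chromatic number $4$). So there is no ``paper proof'' to compare against, and your text should be judged as a standalone argument --- and as such it is not a proof but a research programme, with the decisive steps missing by your own admission. Concretely: (i) the fibration bound $\chi(\mathrm{Cay}(\Gamma,S))\le\chi(\mathrm{Cay}(\Gamma/N,\overline{S_1}))\cdot\chi(\mathrm{Cay}(\langle S_0\rangle,S_0))$ is correct but multiplicative, so iterating it over a chief series of unbounded length yields a bound growing with the composition length, not a uniform constant $c$; the additive/alignment mechanism you gesture at (``colourings must combine additively, as for Cartesian products of bipartite graphs'') is exactly the content of the conjecture and is nowhere supplied. (ii) The inductive step needs $\overline{S_1}$ to be a minimal (or otherwise controlled) generating set of $\Gamma/N$, and you acknowledge that this can fail and that you do not know how to choose $N$ to guarantee it; without this the induction does not even get off the ground, since minimality is the only hypothesis distinguishing these graphs from the no-lonely-colour graphs of \cref{thm:no_lonely_colour_graphs_large_chromatic_number}, which you correctly note have unbounded chromatic number. (iii) The base case for simple (and quasisimple) groups of large rank is an appeal to the classification plus the hope that ``one should be able to show'' the Cayley graph is almost bipartite via sign/determinant/spinor-norm maps; no argument is given, and for a single lonely generator outside the kernel of such a homomorphism the bipartite-type reduction already breaks. (iv) Even the abelian warm-up is overstated: minimality does not force each generator to respect a single invariant factor (e.g.\ $\{(1,1),(0,1)\}$ is a minimal generating set of $\mathds{Z}_2\times\mathds{Z}_4$), although the conclusion there is true for other reasons, since Garcia-Marco and Knauer proved that minimal Cayley graphs of nilpotent groups are $3$-colourable. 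In short, the proposal identifies the right obstacles but resolves none of them, so it neither proves the statement nor matches anything in the paper, which offers no proof because none is known.
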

Although there are various constructions of Cayley graphs with large chromatic number~\cite{bardestani2018chromatic,davies2023chromatic,davies2023prime,davies2024odd,furstenberg1977ergodic,kamae1978van,lubotzky1988ramanujan,nica2024independence,sarkozy1978difference,tomon2015chromatic} (and bounded clique number), it remains open whether or not there is even a minimal Cayley graph with chromatic number at least 5.
Garcia-Marco and Knauer~\cite{garcia2024colouring} showed that the minimal Cayley graph $\text{Cay}(\mathds{Z}_3 \rtimes \mathds{Z}_7,\{(0,1), (1,0)\})$ has chromatic number 4.
They also proved that any minimal Cayley graph of a (finitely generated) generalised dihedral or nilpotent group has chromatic number at most $3$.

We establish \cref{thm:no_lonely_colour_graphs_large_chromatic_number}  by an intricate explicit construction of graphs of arbitrarily large girth and chromatic number (that are furthermore no-lonely-colour graphs).
Erd\H{o}s~\cite{erdos1959graph} was the first who proved that there exist such graphs using the probabilistic method and Lov{\'a}sz~\cite{lovasz1968chromatic} gave the first explicit construction. At this point many  such constructions are known \cite{alon2016coloring,bucic2023explicit,davies2021solution,Davies21,kostochka1999properties,kvrivz1989ahypergraph,lubotzky1988ramanujan,nevsetvril1979short}, many of which have various other desirable properties. Our construction is a delicate modification of Tutte's~\cite{Descartes47,Descartes54} construction of triangle-free graphs with large chromatic number. Other modifications of Tutte's construction have been used before, for example, by Ne{\v{s}}et{\v{r}}il and R{\"o}dl~\cite{nevsetvril1979short} to give a simple explicit construction of graphs with arbitrarily large girth and chromatic number, and more recently by the first author~\cite{Davies21} who constructed intersection graphs of axis-aligned boxes and of lines in $\mathds{R}^3$ with arbitrarily large girth and chromatic number. 

The key difference is a choice and construction of auxiliary hypergraphs of large girth and chromatic number and with further specific properties (we call these \emph{tranquil} hypergraphs). To obtain a class of tranquil hypergraphs, we observe that certain hypergraphs with large girth and chromatic number arising from Gallai's theorem (or equivalently, the multi-dimensional van der Waerden's theorem) have desirable combinatorial properties that may make them a useful gadget of independent interest (see~\cref{thm:tranquil_hypergraphs}). Another variant of these hypergraphs was recently used by Davies, Keller, Kleist, Smorodinsky, and Walczak~\cite{davies2021solution} to resolve Ringel's~\cite{Ringel59} circle problem.

\newpage

The rest of the paper is organised as follows. In~\cref{sec:notation}, we introduce our notation.
In \cref{sec:tranquilhypergraphs}, we examine certain auxiliary hypergraphs that are needed for our construction. In \cref{sec:NLCG}, we prove~\cref{thm:no_lonely_colour_graphs_large_chromatic_number}.
Finally, in \cref{sec:conclusion}, we discuss some related conjectures.

\section{Notation}
\label{sec:notation}

An edge $e$ of a multigraph $G$ is a \emph{bridge} if $G-e$ has more connected components than $G.$

A \emph{closed walk} of length $\ell$ in a hypergraph $H$ is a sequence of alternating vertices and hyperedges $v_0 F_0 v_1 F_1 \dots F_{\ell-1} v_{0}$ such that $v_i,v_{i+1 } \in F_i$ for all $0 \leq i < \ell$, and $F_i\not=F_{i+1}$ for each $0\le i < \ell$ where the sum $i+1$ is taken modulo $\ell$ here and henceforth wherever appropriate, and $v_1,\ldots , v_{\ell -1}$ are all pairwise distinct.
Note that $F_0,\ldots , F_{\ell -1}$ need not all be distinct.
Similarly, we say that a collection of distinct hyperedges $F_1, \ldots , F_{\ell}$ forms a cycle of length $\ell$ if there exist distinct elements $x_1, \ldots , x_{\ell}$ such that for all $i \in \{1, \ldots , \ell - 1\}$ we have $x_i \in F_i \cap F_{i+1}$.
Note that except the vertices $x_i$'s, the edges $F_i$ do not have to be disjoint. This is more commonly known in the literature as \emph{Berge} cycles. The \emph{girth} of a hypergraph is equal to the length of its shortest Berge cycle. 

The \emph{chromatic number} of a hypergraph is the least amount of colours needed to colour the vertices of the hypergraph such that no edge is monochromatic.

\section{Tranquil hypergraphs}
\label{sec:tranquilhypergraphs}

We call an $r$-uniform hypergraph $H$ \emph{\tranquil} if there is a family of labellings $\lambda = \{\lambda_F\}_{F\in E(H)}$ such that for each hyperedge $F\in E(H)$, $\lambda_F: F \to \{1, \ldots , r\}$ is such that for every closed walk $W = v_0F_0v_1F_1\ldots F_{\ell-1}v_0$ of $H$, the multigraph with vertices $\{1,\ldots,r\}$ and  edges $\{\lambda_{F_i}(v_i)\lambda_{F_i}(v_{i+1}) : 0 \le i < \ell\}$ is bridgeless. We refer to this multigraph by $\Projection{\lambda}{W}$ and call it the \emph{projection graph of $W$ with respect to $\lambda$}. See \cref{fig:tranquil_walk_projection} for a very simple example.

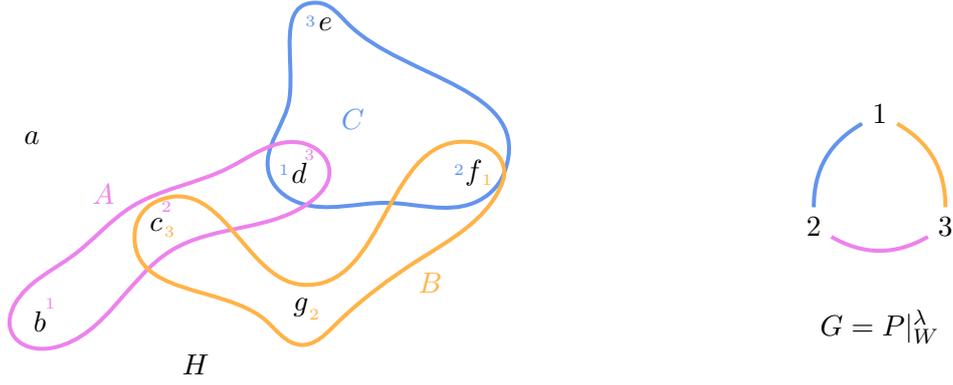
\begin{figure}[!ht]
	\centering
	\begin{tikzpicture}
		\node (hypergraph) at (-4,0) {
			\begin{tikzpicture}
				\node (d) at (0,0) {$d$};
				\node (c) at ($(d)+(200:2)$) {$c$};
				\node (a) at ($(c)+(145:2)$) {$a$};
				\node (b) at ($(c)+(220:2)$) {$b$};
				\node (g) at ($(c)+(330:2.2)$) {$g$};
				\node (f) at ($(d)+(0:2.3)$) {$f$};
				\node (e) at ($(d)+(80:2)$) {$e$};
				
				\draw[line width=1.5pt,CornflowerBlue] ($(e)+(110:0.3)$) to[closed,curve through ={ ($(e)!0.5!(d)+(180:0.3)$) ($(d)+(180:0.4)$) ($(d)+(270:0.4)$) ($(d)!0.5!(f)+(270:0.4)$) ($(f)+(270:0.4)$) ($(f)+(0:0.4)$)}] ($(e)+(0:0.3)$);
				\node[CornflowerBlue] (C-label) at ($(d)+(45:1)$) {$C$};

				\draw[line width=1.5pt,LavenderMagenta] ($(b)+(145:0.4)$) to[closed,curve through ={($(b)!0.5!(c)+(145:0.4)$) ($(c)+(145:0.4)$) ($(c)!0.5!(d)+(90:0.4)$) ($(d)+(90:0.4)$) ($(d)+(0:0.4)$) ($(d)+(300:0.4)$) ($(c)+(300:0.4)$) ($(b)+(300:0.4)$)}] ($(b)+(200:0.4)$);
				\node[LavenderMagenta] (A-label) at ($(c)+(150:0.8)$) {$A$};
				
				\draw[line width=1.5pt,PastelOrange] ($(c)+(90:0.3)$) to[closed,curve through ={($(g)+(90:0.3)$) ($(f)+(90:0.4)$) ($(f)+(0:0.4)$) ($(f)+(330:0.4)$) ($(f)!0.5!(g)+(330:0.5)$) ($(g)+(330:0.5)$) ($(g)+(270:0.5)$) ($(g)+(200:0.5)$)}] ($(c)+(200:0.3)$);
				\node[PastelOrange] (B-label) at ($(f)!0.5!(g)+(315:0.8)$) {$B$};
				
				\node[LavenderMagenta] at ($(b)+(60:0.27)$) {\tiny{$1$}};
				\node[LavenderMagenta] at ($(c)+(60:0.27)$) {\tiny{$2$}};
				\node[LavenderMagenta] at ($(d)+(60:0.27)$) {\tiny{$3$}};
				
				\node[PastelOrange] at ($(f)+(330:0.2)$) {\tiny{$1$}};
				\node[PastelOrange] at ($(g)+(330:0.2)$) {\tiny{$2$}};
				\node[PastelOrange] at ($(c)+(330:0.2)$) {\tiny{$3$}};

				\node[CornflowerBlue] at ($(d)+(170:0.2)$) {\tiny{$1$}};
				\node[CornflowerBlue] at ($(f)+(170:0.2)$) {\tiny{$2$}};
				\node[CornflowerBlue] at ($(e)+(170:0.2)$) {\tiny{$3$}};
		\end{tikzpicture}};
		
		\node (multigraph) at (4,0) {
			\begin{tikzpicture}
				\node (centre) at (0,0) {};
				\node (1) at ($(centre)+(90:1)$) {$1$};
				\node (2) at ($(centre)+(210:1)$) {$2$};
				\node (3) at ($(centre)+(330:1)$) {$3$};
				
				\draw[line width=1.5pt,PastelOrange,bend left] (1) to (3);
				\draw[line width=1.5pt,LavenderMagenta,bend left] (3) to (2);
				\draw[line width=1.5pt,CornflowerBlue,bend left] (2) to (1);
		\end{tikzpicture}};

        \node (H) at (-5,-2.5) {$H$};
        \node (G) at (4,-2) {$G=\Projection{\lambda}{W}$};
	\end{tikzpicture}
	\caption{On the left, we see a 3-uniform hypergraph $H$ with three edges and a labelling for each edge.
		The walk $W=dAcBfCd$ is the unique closed walk in $H$.
		On the right, we see the multigraph $G$ corresponding to $W,$ which is bridgeless.
		Thus, the hypergraph $H$ is tranquil.}
	\label{fig:tranquil_walk_projection}
\end{figure}

The aim of this section is to construct tranquil hypergraphs with large girth and chromatic number.
These are crucial gadgets in our construction of \nlcgs with large girth and chromatic number.
Our construction of tranquil hypergraphs comes from the multi-dimensional van der Waerden's theorem, or equivalently, Gallai's theorem (see, for example, \cite{Rado45}). 

\begin{theorem}\label{thm:Gallai}
    For every $k, d \geq 1$, and for every finite set $T\subseteq \mathds{Z}^d$ and every colouring of $\mathds{Z}^d$ by $k$ colours, there exist $a\in \mathds{Z}^d$ and $r>0$ such taht the set $\{a+rt:t\in T\}$ is monochromatic.
\end{theorem}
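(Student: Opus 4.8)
The statement is the classical multidimensional van der Waerden theorem, also known as the Gallai--Witt theorem, and my plan is to derive it from the Hales--Jewett theorem. Recall that the latter asserts: for all positive integers $n,k$ there is an integer $N=\mathrm{HJ}(n,k)$ such that every $k$-colouring of $[n]^N$ contains a monochromatic combinatorial line; that is, there is a nonempty set $S\subseteq\{1,\dots,N\}$ of ``wildcard'' coordinates together with fixed values $u_j\in[n]$ for $j\notin S$, so that the $n$ points $p^{(1)},\dots,p^{(n)}\in[n]^N$ given by $p^{(i)}_j=i$ for $j\in S$ and $p^{(i)}_j=u_j$ for $j\notin S$ all receive the same colour. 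This theorem has a purely combinatorial self-contained proof (e.g.\ Shelah's), so invoking it is not circular.

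First I would fix the data: enumerate the finite set as $T=\{t_1,\dots,t_n\}$, fix a $k$-colouring $\chi\colon\mathds{Z}^d\to\{1,\dots,k\}$, and put $N=\mathrm{HJ}(n,k)$. Next I would transport $\chi$ to the cube $[n]^N$ via the summation map $\sigma\colon[n]^N\to\mathds{Z}^d$ defined by $\sigma(w_1,\dots,w_N)=t_{w_1}+\dots+t_{w_N}$, setting $c=\chi\circ\sigma$. Applying the Hales--Jewett theorem to the $k$-colouring $c$ produces a monochromatic combinatorial line with wildcard set $S\neq\emptyset$ and fixed coordinates $(u_j)_{j\notin S}$. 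Writing $a=\sum_{j\notin S}t_{u_j}\in\mathds{Z}^d$ and $r=|S|\ge 1$, the $i$-th point of the line satisfies $\sigma(p^{(i)})=|S|\,t_i+a=a+rt_i$; since $c$ is constant along the line, $\chi(a+rt_i)$ is the same value for every $i$, i.e.\ the set $\{a+rt:t\in T\}$ is monochromatic. As $r>0$, this is precisely the desired conclusion.

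If one prefers a proof not relying on Hales--Jewett, one can instead argue by a double induction, on $d$ and on $|T|$, generalizing the colour-focusing proof of van der Waerden's theorem: the base case $d=1$ reduces to van der Waerden itself after translating $T$ so that $\min T=0$ and taking a monochromatic arithmetic progression of length $\max T+1$, which then contains $a+rt$ for every $t\in T$. Either way, the real content lies entirely outside the deduction --- in the Hales--Jewett theorem, or, in the self-contained variant, in choosing a strengthening of the statement robust enough that colours can be ``focused'' across the whole finite configuration simultaneously; the bookkeeping that turns that into the stated theorem is routine. So the main obstacle is not this passage but the Ramsey-theoretic engine underneath it, which I would simply cite.
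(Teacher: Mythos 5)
Your derivation is correct. The paper itself does not prove Theorem~\ref{thm:Gallai}; it is the classical Gallai--Witt theorem and is simply cited (to Rado). Your reduction to the Hales--Jewett theorem is the standard one and the bookkeeping checks out: with $T=\{t_1,\dots,t_n\}$, $N=\mathrm{HJ}(n,k)$ and $\sigma(w)=\sum_j t_{w_j}$, a monochromatic combinatorial line with wildcard set $S$ and fixed entries $(u_j)_{j\notin S}$ maps under $\sigma$ exactly to $\{a+rt_i : 1\le i\le n\}$ with $a=\sum_{j\notin S}t_{u_j}$ and $r=|S|\ge 1$, so monochromaticity of the line under $\chi\circ\sigma$ gives monochromaticity of the homothetic copy of $T$ under $\chi$. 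The only mild caveat is that the entire content is outsourced to Hales--Jewett, which you acknowledge and correctly note is not circular; your sketched alternative (double induction with a van der Waerden base case, after translating $T$ so that $\min T=0$) is also a legitimate classical route.
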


For each integer $d\ge 2$, we let $C_d$ be the $d$-uniform hypergraph on the vertex set $\mathds{Z}^d$ with hyperedges $E(C_d)=\{ \{x+re_1, \ldots , x+re_d\} :x\in \mathds{Z}^d, r\in \mathds{N} \}$, where $e_1,\ldots , e_d$ is the standard basis of $\mathds{Z}^d$.
It follows from \cref{thm:Gallai} that $C_d$ has unbounded chromatic number.
Next, we show that $C_d$ is tranquil for each $d\ge 2$.

\begin{lemma}\label{lem:tranquil}
    For each integer $d\ge 2$, the hypergraph $C_d$ is tranquil.
\end{lemma}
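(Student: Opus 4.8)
The plan is to equip each hyperedge of $C_d$ with its \emph{canonical labelling}: if $F = \{x+re_1,\dots,x+re_d\}$ with $x\in\mathds{Z}^d$ and $r\ge 1$, set $\lambda_F(x+re_i)=i$. One first checks this is well defined, i.e.\ that the pair $(x,r)$ is determined by the set $F$: the pairwise differences of elements of $F$ are precisely the vectors $r(e_i-e_j)$ with $i\ne j$, which for $d\ge 2$ pin down $r$, and then $x$. I would then verify that the family $\lambda=\{\lambda_F\}_{F\in E(C_d)}$ witnesses that $C_d$ is \tranquil.

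So fix a closed walk $W=v_0F_0v_1F_1\dots F_{\ell-1}v_0$, write $a_i=\lambda_{F_i}(v_i)$ and $b_i=\lambda_{F_i}(v_{i+1})$, and let $(x^{(i)},r_i)$ be the base point and scale of $F_i$, so that $v_i=x^{(i)}+r_ie_{a_i}$ and $v_{i+1}=x^{(i)}+r_ie_{b_i}$. Suppose for contradiction that $\Projection{\lambda}{W}$ has a bridge. Since the edge multiset of $\Projection{\lambda}{W}$ is invariant under cyclically shifting $W$, we may assume this bridge is the edge $e_0=a_0b_0$ contributed by the term $i=0$. Loops are never bridges, so $a_0\ne b_0$; deleting the single copy $e_0$ separates $a_0$ from $b_0$, and taking $S$ to be the vertex set of the component of $a_0$ in $\Projection{\lambda}{W}-e_0$ and $\bar S=\{1,\dots,d\}\setminus S$, we get that $a_0\in S$, $b_0\in\bar S$, and no edge of $\Projection{\lambda}{W}$ other than that one copy $e_0$ has exactly one endpoint in $S$.

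The crux is the potential function $\phi\colon\mathds{Z}^d\to\mathds{Z}$ given by $\phi(y)=\sum_{j\in S}y_j$. On a hyperedge $F_i$ it takes only the two values $\phi(x^{(i)})$ (at the vertex labelled by each element of $\bar S$) and $\phi(x^{(i)})+r_i$ (at the vertex labelled by each element of $S$), since $\phi(x^{(i)}+r_ie_j)=\phi(x^{(i)})+r_i\cdot\mathds{1}[j\in S]$. Hence $\phi(v_{i+1})-\phi(v_i)=r_i\bigl(\mathds{1}[b_i\in S]-\mathds{1}[a_i\in S]\bigr)$, which equals $0$ exactly when the edge $a_ib_i$ does not cross the partition $(S,\bar S)$, and has absolute value $r_i\ge 1$ when it does. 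For every $i\ge 1$ the edge $a_ib_i$ is an edge of $\Projection{\lambda}{W}-e_0$, hence does not cross $(S,\bar S)$, so $\phi(v_1)=\phi(v_2)=\dots=\phi(v_\ell)=\phi(v_0)$; but for $i=0$ we have $|\phi(v_1)-\phi(v_0)|=r_0\ge 1$, contradicting $\phi(v_1)=\phi(v_0)$.

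I expect no serious obstacle: the only points requiring care are the well-definedness of $\lambda_F$ and, more importantly, the correct translation of ``$e_0$ is a bridge'' into ``there is a vertex bipartition of $\{1,\dots,d\}$ crossed by exactly one edge of $\Projection{\lambda}{W}$, namely $e_0$'' (in particular ruling out parallel copies of $e_0$ and the loop case). The potential $\phi$ is essentially forced by the product structure of the hyperedges of $C_d$, and once it is in hand the argument is a one-line telescoping.
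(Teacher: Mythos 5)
Your proof is correct and takes essentially the same route as the paper: you use the same canonical labelling, and your potential $\phi(y)=\sum_{j\in S}y_j$ telescoped around the closed walk is exactly the paper's identity $\sum_{xy\in E(D)}r_{xy}(e_y-e_x)=0$ restricted to the coordinates indexed by one side of the cut determined by the bridge. The only difference is presentational (potential/telescoping versus flow conservation summed over a component), and your extra care about well-definedness of $\lambda_F$ and about loops/parallel edges is sound.
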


\begin{proof}
    For each hyperedge $F=\{x+re_1, \ldots , x+re_d\}$ of $C_d$, let $\lambda_F: F \to \{1, \ldots , d\}$ be the labelling such that $\lambda_F(x+re_i)= i$ for each $1\le i \le d$.
    Let $v_0F_0v_1F_1\ldots F_{n-1}v_0$ be a closed walk of $C_d$.
    Without loss of generality, we may assume that $v_0=\mathbf{0}$.
    Let $D$ be the (directed) multigraph with $V(D)=\{1,\ldots,d\}$ and $E(D)=\{\lambda_{F_i}(v_i)\lambda_{F_i}(v_{i+1}) : 0\le i < n\}$.
    Then for each $F_i$, there exists a positive integer $r_{i}$ such that $v_{i+1}=v_i + r_{i}( e_{ \lambda_{F_i}(v_{i+1}) } - e_{ \lambda_{F_i}(v_{i}) } )$.
    In particular, for each edge $xy$ of $D$, there is a positive integer $r_{xy}$, so that
    \[
    \sum_{xy\in E(D)} r_{xy}(e_y-e_x)=0.
    \]

    For each $z\in V(D)$, let $E_z^{\variablestyle{in}}$ be the edges $xz$ of $D$ and let $E_z^\variablestyle{out}$ be the edges $zy$ of $D$.
    Then for each $z\in V(D)$, we have that
    \[
    \sum_{e\in E_z^\variablestyle{in}} r_{e}-\sum_{e\in E_z^\variablestyle{out}} r_{e}=0.
    \]

    Suppose for sake of contradiction that $D$ contains a bridge $ab$, and let $A,B$ be a partition of $V(D)$ with $a\in A$, $b\in B$, and no edge between $A$ and $B$ in $D-ab$.
    Then,
    \[
    r_{ab}
    =
    \sum_{e\in E_a^{\variablestyle{in}}} r_{e}-\sum_{e\in E_a^{\variablestyle{out}}\backslash \{ab\}} r_{e}= \sum_{z\in A\backslash \{a\}} 
    -
    \left( 
    \sum_{e\in E_z^{\variablestyle{in}}} r_{e}-\sum_{e\in E_z^{\variablestyle{out}}} r_e
    \right)
    =
    0,
    \]
    which contradicts the fact that $r_{ab}$ is a positive integer.
    Therefore, $D$ is bridgeless, and thus $C_d$ is tranquil, as desired.
\end{proof}

Although $C_d$ does not have large girth, Prömel and Voigt~\cite{PV88,PV90} proved that it does have induced subhypergraphs with large girth and still large chromatic number. The following is one formulation of a special case of Prömel and Voigt's~\cite{PV88,PV90} sparse Gallai's theorem.

\begin{theorem}[Prömel, Voigt~\cite{PV88,PV90}]\label{Gallai}
    For every triple of positive integers $d,k,g$ with $d\ge 3$, there is a (finite) induced subhypergraph $H_{d,k,g}$ of $C_d$ with chromatic number at least $k$ and girth at least $g$.
\end{theorem}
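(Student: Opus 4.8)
\emph{Proof sketch.}
The hypergraph $H_{d,k,g}$ must have chromatic number at least $k$, girth at least $g$, and be an induced subhypergraph of $C_d$. The chromatic number alone is easy: applying \cref{thm:Gallai} with $T=\{e_1,\dots,e_d\}$ together with a standard compactness argument gives a finite $N=N(d,k)$ such that every $k$-colouring of $\{1,\dots,N\}^d$ leaves some hyperedge $\{a+re_1,\dots,a+re_d\}$ (with all $d$ of its points in the box) monochromatic, so $C_d$ restricted to $\{1,\dots,N\}^d$ already has chromatic number greater than $k$. The plan is to start from such a finite piece and carve out of $C_d$ an induced subhypergraph that keeps the colouring hard while containing no Berge cycle of length less than $g$.

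It is worth first noting why the naive probabilistic route fails. One checks from the standard-basis structure that $C_d$ is linear (distinct hyperedges meet in at most one vertex), so $C_d$ restricted to $\{1,\dots,n\}^d$ has only $O_d(n^{d+1})$ hyperedges and polynomially many short Berge cycles; to make the expected number of short Berge cycles small compared to the expected number of vertices in a random induced subhypergraph, one must take the keep-probability to be a negative power of $n$, leaving only $n^{d-\Omega(1)}$ vertices. But the chromatic number of $C_d$ on a box of side $n$ grows slower than any power of $n$ — reaching chromatic number $k$ requires $n$ of tower-type Hales--Jewett/Gallai size — so its independence number is at least $n^{d-o(1)}$, far more than the number of surviving vertices; thus even after deleting one vertex per surviving short cycle the chromatic number need not be large. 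One therefore needs a way of boosting the chromatic number that creates essentially no short Berge cycles.

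I would get this from the Ne{\v{s}}et{\v{r}}il--R{\"o}dl partite (amalgamation) construction, carried out inside $C_d$. The key structural fact is that $C_d$ is invariant under translations $x\mapsto x+t$ and dilations $x\mapsto mx$; the latter sends the hyperedge $\{x+re_i\}_{i}$ to $\{mx+(mr)e_i\}_{i}$, and in fact maps $C_d$ isomorphically onto the induced subhypergraph on $m\mathds{Z}^d+t$, so $C_d$ contains arbitrarily many homothetic copies of any of its finite induced subhypergraphs. Starting from the hard-to-colour finite piece above (refined slightly to make it $d$-partite), I would iterate an amalgamation step: take many homothetic copies of the current hypergraph, indexed by the hyperedges of a fixed auxiliary pattern hypergraph of girth at least $g$, and glue them along prescribed common sub-configurations. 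By the standard partite-construction pigeonhole over the copies placed on the pattern, each step pushes up the chromatic number, so after finitely many steps it reaches $k$; and since the pattern has girth at least $g$ and the copies are otherwise in general position, every Berge cycle of length less than $g$ in the amalgam lies inside a single copy, so the girth never drops below $g$. This produces the desired finite induced subhypergraph $H_{d,k,g}$.

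The hard part — and the reason this is quoted from Pr\"omel and Voigt rather than reproved here — is the arithmetic realisation of a single amalgamation step inside $C_d$: one must choose the dilation factors $m$ and translation vectors $t$ of the participating copies so that they agree exactly on the prescribed common sub-configuration, are otherwise disjoint, and, crucially (since we need an \emph{induced} subhypergraph), their union contains no unintended hyperedge of $C_d$ — such a hyperedge could close up a short Berge cycle. Guaranteeing all of this at once calls for a delicate, typically greedy choice of the $m$'s and $t$'s, with rapidly growing parameters so that distinct copies are ``generic'' relative to one another, and this is where essentially all of the work lies.
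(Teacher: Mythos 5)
The paper gives no proof of this statement: it is quoted as a special case of Prömel and Voigt's sparse Gallai theorem~\cite{PV88,PV90}, so there is no in-paper argument to compare yours against. Your sketch is a plausible and essentially faithful outline of the partite-amalgamation strategy behind their result (finite Gallai via compactness for the chromatic number, then homothety-invariant amalgamation inside $C_d$ to sparsify), and you correctly identify --- and, exactly as the paper does, defer to the citation --- the one genuinely hard step, namely realising each amalgamation arithmetically inside $C_d$ as an \emph{induced} subhypergraph without creating unintended hyperedges or short Berge cycles.
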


Prömel and Voigt's~\cite{PV88,PV90} construction of such (finite) hypergraphs is explicit.
As the property of being tranquil is preserved by removing vertices, we obtain the following from \cref{lem:tranquil,Gallai}.

\begin{theorem}
    \label{thm:tranquil_hypergraphs}
    For every triple of positive integers $r,k,g$ with $r\ge 3$, there exists a \tranquil $r$-uniform hypergraph $H_{g,k,r}$ with chromatic number at least $k$ and girth at least $g$.
\end{theorem}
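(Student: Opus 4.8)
The plan is to obtain \cref{thm:tranquil_hypergraphs} as essentially a one-line consequence of \cref{lem:tranquil} and \cref{Gallai}; the only thing that needs checking is the (essentially immediate) claim, already flagged in the text, that tranquility is inherited by induced subhypergraphs. First I would apply \cref{Gallai} with $d := r$ to obtain a finite induced subhypergraph $H := H_{r,k,g}$ of $C_r$ with chromatic number at least $k$ and girth at least $g$, and then take this $H$ to be the desired hypergraph $H_{g,k,r}$. It then remains only to argue that $H$ is tranquil, since $C_r$ is tranquil by \cref{lem:tranquil}.

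For this, recall that being an induced subhypergraph of $C_r$ means $V(H) \subseteq V(C_r)$ and $E(H) = \{F \in E(C_r) : F \subseteq V(H)\}$; in particular each hyperedge of $H$ still has size exactly $r$, so $H$ is $r$-uniform, and the labellings $\lambda_F$ supplied by the proof of \cref{lem:tranquil} make sense verbatim for the hyperedges of $H$. Let $\lambda = \{\lambda_F\}_{F \in E(C_r)}$ be the family witnessing that $C_r$ is tranquil, and let $\lambda' = \{\lambda_F\}_{F \in E(H)}$ be its restriction to $E(H)$. Any closed walk $W = v_0 F_0 v_1 \dots F_{\ell-1} v_0$ of $H$ uses only vertices and hyperedges of $H$, hence it is also a closed walk of $C_r$, and the projection graph $\Projection{\lambda'}{W}$ — whose vertex set is $\{1,\dots,r\}$ and whose edge multiset $\{\lambda_{F_i}(v_i)\lambda_{F_i}(v_{i+1}) : 0 \le i < \ell\}$ depends only on the sequence $W$ and on the unchanged labellings — is literally the same multigraph as $\Projection{\lambda}{W}$. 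Since $C_r$ is tranquil, this multigraph is bridgeless, so $\lambda'$ witnesses that $H$ is tranquil.

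As everything above is a direct composition of already-established facts, there is no genuine obstacle in this proof: the only point requiring care is the bookkeeping check that passing to an induced subhypergraph neither breaks $r$-uniformity nor creates any closed walk not already present in $C_r$, both of which are immediate from the definitions. If one wished to be even more careful about the word ``induced,'' one could note that the argument in fact works for \emph{any} subhypergraph of a tranquil $r$-uniform hypergraph obtained by deleting vertices together with all hyperedges that would otherwise be shrunk, which is precisely the operation under which the Prömel--Voigt hypergraphs $H_{r,k,g}$ arise from $C_r$.
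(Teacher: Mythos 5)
Your proposal is correct and matches the paper's proof exactly: the paper obtains \cref{thm:tranquil_hypergraphs} directly from \cref{lem:tranquil} and \cref{Gallai}, remarking only that tranquility is preserved by removing vertices, which is the bookkeeping check you carry out in detail. Nothing is missing.
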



\section{\NLCGs}
\label{sec:NLCG}

In this section, we use the hypergraphs $H_{g,k,r}$ from~\cref{thm:tranquil_hypergraphs} to construct graphs $G_{g,k}$ proving~\cref{thm:no_lonely_colour_graphs_large_chromatic_number}.

\paragraph{Hyper-Tutte-construction:}
We use a modification of Tutte's construction~\cite{Descartes47,Descartes54} to inductively construct a family of graphs $G_{g,k}$ as follows, see~\cref{fig:hyper_tutte} for an illustration.
Let $G_{g,1}$ be a single vertex.
Assuming that $G_{g,k-1}$ has been constructed we construct $G_{g,k}$ as follows.
Let $r \coloneqq \Abs{G_{g,k-1}}$, in particular we assume $V(G_{g,k-1})=\{1, 2, \dots, r\}$.
We use the hypergraph $H=H_{\lceil g/3 \rceil ,k,r}$ given by~\cref{thm:tranquil_hypergraphs}.
As $H$ is \tranquil, there is a collection of labellings $\lambda = \{\lambda_F: F \to \{1, \ldots , r\} : F \in \E{H}\}$ such that 
\begin{enumerate}[label=\textbf{(TR)},leftmargin=5em]
    \item \label{item:tranq} for every closed walk $W$ in $H$, the multigraph $\Projection{\lambda}{W}$ is bridgeless.
\end{enumerate}
We obtain $G_{g,k}$ by taking the vertex set to be $\Abs{\E{H}}$ many copies of $G_{g,k-1}$, let's call this collection $\mathcal{G}$, together with an independent set $T$ of size $\Abs{\V{H}}$, and defining edges as follows.
We consider two bijections; $\nu : T \rightarrow \V{H}$ and $\mu : \E{H} \rightarrow \mathcal{G}.$
Now for every edge $F\in E(H)$ we add a perfect matching between $\nu^{-1}(F)$ and $\mu(F)$ such that every vertex $i \in \mu(F)$ is mapped to the vertex $v\in \nu^{-1}(F)$ with $\lambda_F(\nu(v))=i$.

\tikzstyle{vertex} = [draw, circle, inner sep=.3mm, thick,fill=black]

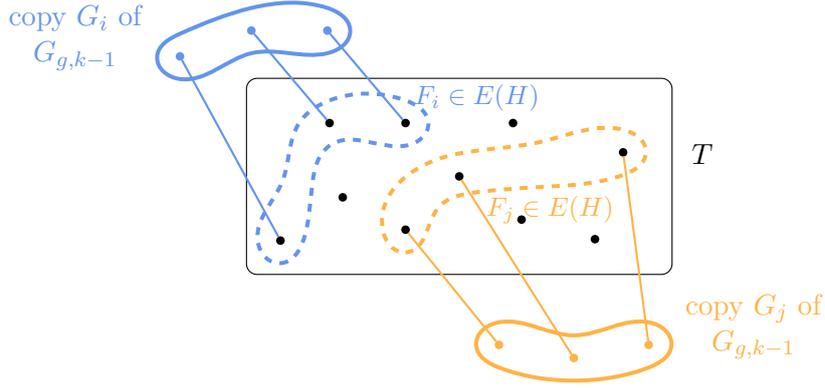
\begin{figure}[!ht]
    \centering
    \begin{tikzpicture}
        \node (center) at (0,0) {};

        \node[vertex] (x-1) at ($(center)$) {};
        \node[vertex] (x-2) at ($(x-1)+(135:1)$) {};
        \node[vertex] (x-3) at ($(x-2)+(180:1)$) {};
        \node[vertex] (x-4) at ($(x-3)+(280:1)$) {};
        \node[vertex] (x-5) at ($(x-4)+(215:1)$) {};
        \node[vertex] (x-6) at ($(x-1)+(225:1)$) {};
        \node[vertex] (x-7) at ($(x-1)+(45:1)$) {};
        \node[vertex] (x-8) at ($(x-1)+(325:1)$) {};
        \node[vertex] (x-9) at ($(x-8)+(345:1)$) {};
        \node[vertex] (x-10) at ($(x-7)+(345:1.5)$) {};

        \draw[rounded corners] ($(center)-(2.8,1.3)$) rectangle ($(center)+(2.8,1.3)$);

        \node (T) at ($(center)+(3.2,0.3)$) {$T$};

        \draw[line width=1.5pt,CornflowerBlue,dashed] ($(x-5)+(270:0.3)$) to[closed,curve through ={ ($(x-5)+(0:0.3)$) ($(x-5)!0.5!(x-3)+(0:0.1)$) ($(x-3)+(270:0.3)$) ($(x-2)+(270:0.3)$) ($(x-2)+(0:0.3)$) ($(x-2)+(90:0.3)$) ($(x-3)+(90:0.3)$) ($(x-3)+(180:0.4)$) ($(x-5)!0.5!(x-3)+(180:0.3)$)}] ($(x-5)+(180:0.3)$);

        \draw[line width=1.5pt,PastelOrange,dashed] ($(x-6)+(270:0.3)$) to[closed,curve through ={ ($(x-6)!0.5!(x-1)+(315:0.1)$) ($(x-1)+(315:0.3)$) ($(x-1)!0.5!(x-10)+(270:0.3)$) ($(x-10)+(270:0.3)$) ($(x-10)+(0:0.3)$) ($(x-10)+(90:0.3)$) ($(x-10)!0.5!(x-1)+(90:0.3)$) ($(x-1)+(110:0.3)$) }] ($(x-6)+(180:0.3)$);

        \node[vertex,PastelOrange] (j-2) at ($(x-9)+(260:1.6)$) {};
        \node[vertex,PastelOrange] (j-1) at ($(j-2)+(170:1)$) {};
        \node[vertex,PastelOrange] (j-3) at ($(j-2)+(10:1)$) {};

        \draw[line width=1.5pt,PastelOrange] ($(j-1)+(270:0.3)$) to[closed,curve through ={ ($(j-2)+(270:0.3)$) ($(j-3)+(270:0.3)$) ($(j-3)+(0:0.3)$) ($(j-3)+(90:0.3)$) ($(j-2)+(90:0.3)$) ($(j-1)+(90:0.3)$)}] ($(j-1)+(180:0.3)$);

        \draw[line width=0.8pt,PastelOrange] (j-1) to (x-6);
        \draw[line width=0.8pt,PastelOrange] (j-2) to (x-1);
        \draw[line width=0.8pt,PastelOrange] (j-3) to (x-10);

        \node[PastelOrange,align=center] (G-j) at ($(j-3)+(10:1.4)$) {copy $G_j$ of\\$G_{g,k-1}$};
        \node[PastelOrange,align=center] (F-j) at ($(x-8)+(20:0.4)$) {\small{$F_j \in \E{H}$}};

        \node[vertex,CornflowerBlue] (i-2) at ($(x-3)+(130:1.6)$) {};
        \node[vertex,CornflowerBlue] (i-1) at ($(i-2)+(200:1)$) {};
        \node[vertex,CornflowerBlue] (i-3) at ($(i-2)+(360:1)$) {};

        \draw[line width=1.5pt,CornflowerBlue] ($(i-1)+(270:0.3)$) to[closed,curve through ={ ($(i-2)+(270:0.3)$) ($(i-3)+(270:0.3)$) ($(i-3)+(0:0.3)$) ($(i-3)+(90:0.3)$) ($(i-2)+(90:0.3)$) ($(i-1)+(90:0.3)$)}] ($(i-1)+(180:0.3)$);

        \draw[line width=0.8pt,CornflowerBlue] (i-1) to (x-5);
        \draw[line width=0.8pt,CornflowerBlue] (i-2) to (x-3);
        \draw[line width=0.8pt,CornflowerBlue] (i-3) to (x-2);

        \node[CornflowerBlue,align=center] (G-i) at ($(i-1)+(170:1.4)$) {copy $G_i$ of\\$G_{g,k-1}$};
        \node[CornflowerBlue,align=center] (F-i) at ($(x-2)+(20:1)$) {\small{$F_i \in \E{H}$}};
    \end{tikzpicture}
    \caption{Additional to the independent set $T$, for every edge $F_i \in \E{H}$ we add a copy $G_i$ of $G_{g,k-1}$ and a matching between it and the corresponding vertices in $T.$}
    \label{fig:hyper_tutte}
\end{figure}

\paragraph{Cycles in the constructed graphs:}
Let $C$ be a cycle in $G_{g,k}$ that does not lie completely in a copy of $G_{g,k-1}$.
We describe how $C$ induces a closed walk in $H$, which we refer to as $\hyperwalk{H}{C}$.
Consider $G_1,G_2,\dots,G_{\ell}$ to be the copies of $G_{g,k-1}$ the cycle $C$ passes through in that order (note that $G_i$'s do not have to be distinct).
Let $P_i$ be the segment of $C$ lying in $G_i$ starting in vertex $\Start{P_i}$ and ending in vertex $\End{P_i}$. 

Then $\hyperwalk{H}{C}$ is the following sequence: 

\begin{gather*}
    \Fkt{\lambda^{-1}_{\Fkt{\mu^{-1}}{G_1}}}{\Start{P_1}},\Fkt{\mu^{-1}}{G_1},
    \Fkt{\lambda^{-1}_{\Fkt{\mu^{-1}}{G_1}}}{\End{P_1}},
    \Fkt{\mu^{-1}}{P_2},\\
    \Fkt{\lambda^{-1}_{\Fkt{\mu^{-1}}{P_2}}}{\End{P_2}},
    \dots,
    \Fkt{\mu^{-1}}{G_{\ell}},
    \Fkt{\lambda^{-1}_{\Fkt{\mu^{-1}}{G_{\ell}}}}{\End{P_{\ell}}}.
\end{gather*}

\begin{figure}[!ht]
    \centering
    \begin{tikzpicture}
        \node (centre) at (0,0) {};

        \node[fill=white,circle] (a) at ($(centre)+(120:2)$) {$a$};
        \node[fill=white,circle] (c) at ($(centre)+(240:2)$) {$c$};
        \node[fill=white,circle] (d) at ($(centre)+(300:2)$) {$d$};
        \node[fill=white,circle] (f) at ($(centre)+(60:2)$) {$f$};
        \node[fill=white,circle] (b) at ($(a)!0.5!(c)+(180:2)$) {$b$};
        \node[fill=white,circle] (e) at ($(f)!0.5!(d)+(0:2)$) {$e$};

        \draw[rounded corners] ($(center)-(3.5,2.4)$) rectangle ($(center)+(3.5,2.4)$);
        \node[vertex] (ab-1) at ($(a)!0.33!(b)$) {};
        \node[vertex] (ab-1) at ($(a)!0.66!(b)$) {};

        \node[vertex] (ef-1) at ($(e)!0.33!(f)$) {};
        \node[vertex] (ef-1) at ($(e)!0.66!(f)$) {};

        \node[vertex] (bc-1) at ($(b)!0.33!(c)$) {};
        \node[vertex] (bc-1) at ($(b)!0.66!(c)$) {};

        \node[vertex] (de-1) at ($(d)!0.33!(e)$) {};
        \node[vertex] (de-1) at ($(d)!0.66!(e)$) {};

        \node[vertex] at ($(a)!0.6!(b)+(315:0.7)$) {};
        \node[vertex] at ($(c)!0.5!(b)+(45:0.7)$) {};

        \node[vertex] at ($(f)!0.7!(e)+(225:0.7)$) {};
        \node[vertex] at ($(d)!0.55!(e)+(135:0.7)$) {};

        \node[vertex] at ($(a)!0.4!(b)+(135:0.8)$) {};
        \node[vertex] at ($(a)!0.8!(b)+(135:0.7)$) {};
        \node[vertex] at ($(a)!0.63!(b)+(135:1.3)$) {};
        \node[vertex] at ($(a)!0.2!(b)+(135:1)$) {};

        \node[vertex] at ($(f)!0.4!(e)+(45:1.1)$) {};
        \node[vertex] at ($(f)!0.8!(e)+(45:0.7)$) {};
        \node[vertex] at ($(f)!0.63!(e)+(45:1.3)$) {};
        \node[vertex] at ($(f)!0.2!(e)+(45:0.7)$) {};

        \node[vertex] at ($(b)!0.4!(c)+(225:1.1)$) {};
        \node[vertex] at ($(b)!0.8!(c)+(225:1)$) {};
        \node[vertex] at ($(b)!0.63!(c)+(225:0.5)$) {};
        \node[vertex] at ($(b)!0.2!(c)+(225:0.7)$) {};

        \node[vertex] at ($(e)!0.4!(d)+(315:0.8)$) {};
        \node[vertex] at ($(e)!0.8!(d)+(315:0.7)$) {};
        \node[vertex] at ($(e)!0.63!(d)+(315:1.3)$) {};
        \node[vertex] at ($(e)!0.2!(d)+(315:1)$) {};

        \node[CornflowerBlue] (F-1) at ($(a)!0.6!(b)+(135:0.7)$) {$F_1$};
        \node[PastelOrange] (F-5) at ($(f)!0.6!(e)+(45:0.7)$) {$F_5$};
        \node[PastelOrange] (F-2) at ($(b)!0.4!(c)+(225:0.7)$) {$F_2$};
        \node[CornflowerBlue] (F-4) at ($(e)!0.63!(d)+(315:0.7)$) {$F_4$};
        \node[LavenderMagenta] (F-3) at ($(centre)+(150:0.4)$) {$F_3$};

        \draw[line width=1.5pt,CornflowerBlue,dashed] ($(a)+(135:0.3)$) to[closed,curve through ={($(a)+(45:0.3)$) ($(a)+(315:0.3)$) ($(a)!0.5!(b)+(315:0.3)$) ($(b)+(315:0.3)$) ($(b)+(225:0.3)$) ($(b)+(135:0.3)$)}] ($(b)!0.5!(a)+(135:0.3)$);
        \node[vertex] (ap-1) at ($(a)+(135:3)$) {};
        \node[vertex] (bp-1) at ($(b)+(125:3.8)$) {};
        \def\dist{0.5}
        \draw[line width=1.5pt,CornflowerBlue] ($(ap-1)+(135:\dist)$) to[closed,curve through ={($(ap-1)+(45:\dist)$) ($(ap-1)+(315:\dist)$) ($(ap-1)!0.5!(bp-1)+(315:\dist)$) ($(bp-1)+(315:\dist)$) ($(bp-1)+(225:\dist)$) ($(bp-1)+(135:\dist)$)}] ($(bp-1)!0.5!(ap-1)+(135:\dist)$);

        \draw[line width=1.5pt,CornflowerBlue,dashed] ($(e)+(135:0.3)$) to[closed,curve through ={($(e)+(45:0.3)$) ($(e)+(315:0.3)$) ($(e)!0.5!(d)+(315:0.3)$) ($(d)+(315:0.3)$) ($(d)+(225:0.3)$) ($(d)+(135:0.3)$)}] ($(d)!0.5!(e)+(135:0.3)$);
        \node[vertex] (ep-4) at ($(e)+(315:3.8)$) {};
        \node[vertex] (dp-4) at ($(d)+(315:3)$) {};
        \draw[line width=1.5pt,CornflowerBlue] ($(ep-4)+(135:\dist)$) to[closed,curve through ={($(ep-4)+(45:\dist)$) ($(ep-4)+(315:\dist)$) ($(ep-4)!0.5!(dp-4)+(315:\dist)$) ($(dp-4)+(315:\dist)$) ($(dp-4)+(225:\dist)$) ($(dp-4)+(135:\dist)$)}] ($(dp-4)!0.5!(ep-4)+(135:\dist)$);

        \draw[line width=1.5pt,PastelOrange,dashed] ($(f)+(225:0.3)$) to[closed,curve through ={($(f)+(135:0.3)$) ($(f)+(45:0.3)$) ($(f)!0.5!(e)+(45:0.3)$) ($(e)+(45:0.3)$) ($(e)+(315:0.3)$) ($(e)+(225:0.3)$)}] ($(e)!0.5!(f)+(225:0.3)$);
        \node[vertex] (fp-5) at ($(f)+(35:3.8)$) {};
        \node[vertex] (ep-5) at ($(e)+(35:3)$) {};
        \draw[line width=1.5pt,PastelOrange] ($(fp-5)+(225:\dist)$) to[closed,curve through ={($(fp-5)+(135:\dist)$) ($(fp-5)+(45:\dist)$) ($(fp-5)!0.5!(ep-5)+(45:\dist)$) ($(ep-5)+(45:\dist)$) ($(ep-5)+(315:\dist)$) ($(ep-5)+(225:\dist)$)}] ($(ep-5)!0.5!(fp-5)+(225:\dist)$);

        \draw[line width=1.5pt,PastelOrange,dashed] ($(b)+(225:0.3)$) to[closed,curve through ={($(b)+(135:0.3)$) ($(b)+(45:0.3)$) ($(b)!0.5!(c)+(45:0.3)$) ($(c)+(45:0.3)$) ($(c)+(315:0.3)$) ($(c)+(225:0.3)$)}] ($(c)!0.5!(b)+(225:0.3)$);
        \node[vertex] (bp-2) at ($(b)+(225:3.8)$) {};
        \node[vertex] (cp-2) at ($(c)+(225:3)$) {};
        \draw[line width=1.5pt,PastelOrange] ($(bp-2)+(225:\dist)$) to[closed,curve through ={($(bp-2)+(135:\dist)$) ($(bp-2)+(45:\dist)$) ($(bp-2)!0.5!(cp-2)+(45:\dist)$) ($(cp-2)+(45:\dist)$) ($(cp-2)+(315:\dist)$) ($(cp-2)+(225:\dist)$)}] ($(cp-2)!0.5!(bp-2)+(225:\dist)$);

        \draw[line width=1.5pt,LavenderMagenta,dashed] ($(a)+(90:0.3)$) to[closed,curve through ={($(a)!0.5!(f)+(90:0.4)$) ($(f)+(90:0.3)$) ($(f)+(0:0.3)$) ($(f)!0.5!(d)+(0:0.3)$) ($(d)+(0:0.3)$) ($(d)+(270:0.3)$) ($(d)!0.5!(c)+(270:0.4)$) ($(c)+(270:0.3)$) ($(c)+(180:0.3)$) ($(c)!0.5!(a)+(180:0.3)$)}] ($(a)+(180:0.3)$);
        \node[vertex] (ap-3) at ($(a)+(80:3)$) {};
        \node[vertex] (cp-3) at ($(c)+(80:6)$) {};
        \node[vertex] (fp-3) at ($(f)+(80:3)$) {};
        \node[vertex] (dp-3) at ($(d)+(80:6)$) {};
        \draw[line width=1.5pt,LavenderMagenta] ($(ap-3)+(90:\dist)$) to[closed,curve through ={($(ap-3)!0.5!(fp-3)+(90:\dist)$) ($(fp-3)+(90:\dist)$) ($(fp-3)+(0:\dist)$) ($(dp-3)+(0:\dist)$) ($(dp-3)+(270:\dist)$) ($(dp-3)!0.5!(cp-3)+(270:\dist)$) ($(cp-3)+(270:\dist)$) ($(cp-3)+(180:\dist)$)}] ($(ap-3)+(180:\dist)$);

        \begin{pgfonlayer}{background}
            \draw[AppleGreen] (a) to (ap-3);
            \draw[AppleGreen] (a) to (ap-1);
            \draw[AppleGreen] (b) to (bp-1);
            \draw[AppleGreen] (b) to (bp-2);
            \draw[AppleGreen] (c) to (cp-2);
            \draw[AppleGreen] (c) to (cp-3);
            \draw[AppleGreen] (d) to (dp-3);
            \draw[AppleGreen] (d) to (dp-4);
            \draw[AppleGreen] (e) to (ep-4);
            \draw[AppleGreen] (e) to (ep-5);
            \draw[AppleGreen] (f) to (fp-5);
            \draw[AppleGreen] (f) to (fp-3);

            \draw[dotted,AppleGreen,line width=1.5pt] (ap-1) to (bp-1);
            \draw[dotted,AppleGreen,line width=1.5pt] (bp-2) to (cp-2);
            \draw[dotted,AppleGreen,line width=1.5pt] (dp-4) to (ep-4);
            \draw[dotted,AppleGreen,line width=1.5pt] (ep-5) to (fp-5);
            \draw[dotted,AppleGreen,line width=1.5pt] (ap-1) to (bp-1);
            \draw[dotted,AppleGreen,line width=1.5pt] (ap-3) to (fp-3);
            \draw[dotted,AppleGreen,line width=1.5pt] (cp-3) to (dp-3);
        \end{pgfonlayer}
    \end{tikzpicture}
    \caption{The cycle $C$ in $G_{g,k}$ drawn in \textcolor{AppleGreen}{green} corresponds to the walk $\hyperwalk{H}{C} = a F_1 b F_2 c F_3 d F_4 e F_5 f F_3 a$ in the hypergraph $H$.}
    \label{fig:cycles}
\end{figure}
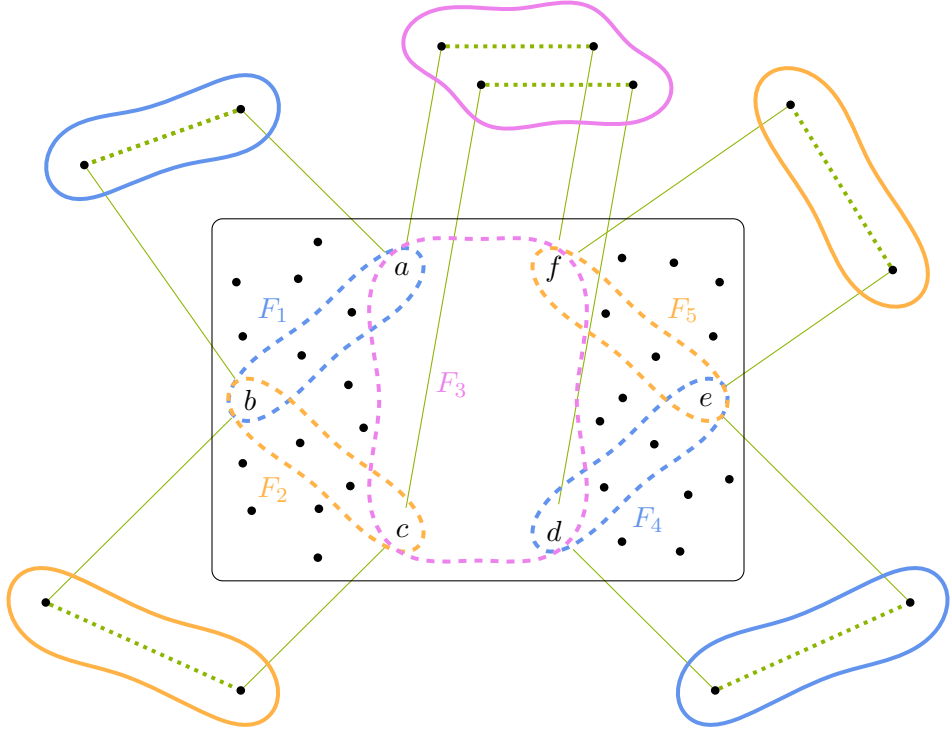


We now show that $G_{g,k}$ has a proper edge-colouring in which each cycle contains no colour exactly once.
In particular, this shows that $G_{g,k}$ is a \nlcg.

\begin{lemma}
    \label{thm:no_lonely_colour}
    For every pair of positive integers $g,k$, the graph $G_{g,k}$ has a proper edge-colouring in which each cycle contains no colour exactly once.
\end{lemma}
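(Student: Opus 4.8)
The plan is to induct on $k$, in step with the inductive definition of $G_{g,k}$. The base case $G_{g,1}$ is a single vertex, with no edges and hence no cycles, so there is nothing to prove. For the inductive step I assume $G_{g,k-1}$ carries a proper edge-colouring $c$ in which no cycle contains a colour exactly once, and extend it to $G_{g,k}$ as follows: put a copy of $c$ on every copy of $G_{g,k-1}$ in $\mathcal{G}$, using the \emph{same} palette on all of them; and, for each hyperedge $F\in\E{H}$, give every edge of the matching between $\nu^{-1}(F)$ and $\mu(F)$ a single brand-new colour $\kappa_F$, the $\kappa_F$ being pairwise distinct and disjoint from the palette of $c$. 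Properness is then routine: inside a copy the colouring is $c$; every vertex of a copy meets exactly one matching edge, whose colour $\kappa_F$ is new; and every $t\in T$ meets only matching edges, one per hyperedge containing $\nu(t)$, which receive pairwise distinct colours.

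It remains to show that no cycle $C$ of $G_{g,k}$ contains a colour exactly once. If $C$ lies in a single copy this is the inductive hypothesis, so assume $C$ crosses copies; then, as described before the statement, $C$ decomposes into segments $P_1,\dots,P_\ell$ lying in copies $G_1,\dots,G_\ell$ (not necessarily distinct), consecutive segments being joined through $T$ by two matching edges, and $C$ induces the closed walk $W=\hyperwalk{H}{C}$ in $H$. For a new colour $\kappa_F$: every time $C$ enters the copy $\mu(F)$ it does so along a $\kappa_F$-edge and leaves along another, so $\kappa_F$ occurs an even — in particular $\neq 1$ — number of times on $C$. The substantive case is an old colour $\gamma$ of $c$. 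Identifying each $G_i$ with $G_{g,k-1}$, each segment $P_i$ becomes a path of $G_{g,k-1}$ whose ends $\Start{P_i},\End{P_i}$ are exactly the endpoints of the edge that $P_i$ contributes to $\Projection{\lambda}{W}$, and on $C$ the colour $\gamma$ occurs only inside these segments.

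So suppose for contradiction that $\gamma$ occurs exactly once on $C$, on a single edge $e$ of some $P_{i_0}$. By \ref{item:tranq} the multigraph $\Projection{\lambda}{W}$ is bridgeless, so its edge $\{\Start{P_{i_0}},\End{P_{i_0}}\}$ lies on a cycle of $\Projection{\lambda}{W}$; concatenating, in cyclic order, the $G_{g,k-1}$-paths corresponding to the edges of that cycle (which chain up since consecutive edges share an endpoint) yields a closed walk $\omega$ in $G_{g,k-1}$ on which $\gamma$ still occurs exactly once, namely on $e$. Finally, the edge multiset of a closed walk is an even multigraph, in which the multiplicity-one edge $e$ cannot be a bridge, so $e$ lies on an honest simple cycle $Z$ of $G_{g,k-1}$ using only edges of $\omega$; then $\gamma$ occurs exactly once on $Z$, contradicting the inductive hypothesis and completing the induction.

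I expect this last reduction to be the main obstacle. Passing from ``$\gamma$ used exactly once on the crossing cycle $C$'' to ``$\gamma$ used exactly once on a closed walk $\omega$ of $G_{g,k-1}$'' is exactly where tranquility is used, and passing from $\omega$ to a genuine simple cycle of $G_{g,k-1}$ must be done with a little care (e.g.\ via a cycle decomposition of the even multigraph carried by $\omega$, checking that the cycle through $e$ is not a degenerate two-edge closed walk). The remaining points — that $\hyperwalk{H}{C}$ is a legitimate closed walk of $H$, that consecutive segments of $C$ are joined through a single vertex of $T$ by two matching edges, and that the edges of $\Projection{\lambda}{W}$ are precisely the endpoint pairs $\{\Start{P_i},\End{P_i}\}$ — are immediate from the construction but worth recording explicitly.
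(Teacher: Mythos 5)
Your proposal is correct and follows essentially the same route as the paper: the same inductive colouring (old palette inside the copies, one fresh colour per matching), the same parity argument for the new colours, and the same use of tranquility — a cycle of $\Projection{\lambda}{W}$ through the edge $\{\Start{P_{i_0}},\End{P_{i_0}}\}$ (equivalently, a path in $\Projection{\lambda}{W}-xy$) whose segments concatenate into an even/Eulerian multigraph in $G_{g,k-1}$, from which a genuine cycle through $e$ is extracted to contradict the inductive hypothesis. The caveats you flag (ruling out the degenerate two-edge cycle, i.e.\ a parallel copy of $e$) are exactly the points the paper also handles.
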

\begin{proof}
    Clearly $G_{g,1}$ has such an edge-colouring since $G_{g,1}$ is edgeless.
    So, we argue inductively assuming that $G_{g,k-1}$ has a proper edge-colouring $\kappa_{k-1}: \V{G} \to \N$ such that each cycle contains no colour exactly once.
    Let $H = H_{\lceil g/3 \rceil,k,r}$ be the hypergraph from~\cref{thm:tranquil_hypergraphs} used in the last step of the construction of $G_{g,k}$.
    
    We colour the edges inside of all copies of $G_{g,k-1}$ according to $\kappa_{k-1}$.
    Then, for each $F \in \E{H}$ we colour the matching between $\Fkt{\mu}{F}$ and $T$ with a new colour, thus adding $\Abs{\E{H}}$ colours, to obtain the edge-colouring $\kappa_{k}$ of $G_{g,k}$.

    The edge-colouring $\kappa_k$ is proper as $\kappa_{k-1}$ is a proper edge-colouring and each new colour induces a matching between $T$ and some $\Fkt{\mu}{F}$.

    It remains to show that each cycle $C$ of $G_{g,k}$ contains no colour exactly once.
    By induction, if $C$ completely lies in some copy of $G_{g,k-1}$ in $G_{g,k}$, then $C$ contains no colour exactly once. So, we may assume otherwise.
    Let $C$ be such a cycle, and $\hyperwalk{H}{C}$ be the walk it induces in $H$.
    Let $\lambda = \{\lambda_F: F \to \{1, \ldots , r\} : F \in \E{H}\}$ be the collection of labellings witnessing that $H$ is \tranquil, i.e.~that fulfills~\cref{item:tranq} that is used in the construction of $G_{g,k}$.

    As before let $G_1,\dots, G_{\ell}$ be the copies of $G_{g,k-1}$ that the cycle $C$ passes through in that order and let $P_i$ be the maximal subpath of $C$ in $G_i$.

    Now suppose towards a contradiction that $C$ contains an edge $e$ such that $\Fkt{\kappa_k}{e} \neq \Fkt{\kappa_k}{e'}$ for all edges $e'$ in $C$ with $e \neq e'$.
    As $C$ contains an even number of edges from every added matching, the edge $e$ has to lie in some $C_i.$
    Consider the corresponding hyperedge $F_i$ and vertices $x,y \in \V{H}$ such that $\Fkt{\lambda_{F_i}}{x} = \Start{C_i}$ and $\Fkt{\lambda_{F_i}}{y} = \End{C_i}$.
    As $\Projection{\lambda}{\hyperwalk{H}{C}}$ is bridgeless, the edge $xy$ is not a bridge and there is an $x$-$y$-path $Q$ in $\Projection{\lambda}{\hyperwalk{H}{C}} - xy,$ 
    this corresponds to a $x$-$y$-walk in $H$, say using the edges $F_{i_1}, \dots, F_{i_h}$.
    We consider the Eulerian multigraph $\tilde{G}$ on the vertex set of $G_{g,k-1}$ obtained by adding disjoint copies of the paths $P_{i_1},\dots, P_{i_h},$ and $P_i$, that is, an edge lying in multiple of these paths is added multiple times.
    The underlying simple graph of $\tilde{G}$ is a subgraph of $G_{g,k-1}$.
    If $e$ has a parallel multi-edge in $\tilde{G}$, then this edge corresponds to an edge in $C$ having the same colour as $e$, a contradiction.
    So $e$ lies on a cycle of length at least three in $\tilde{G}$.
    This cycle corresponds to a cycle in $G_{g,k-1}$ and as such, by induction hypothesis, contains another edge $e'$ with $\Fkt{\kappa_{k}}{e'} = \Fkt{\kappa_{k-1}}{e'} = \Fkt{\kappa_{k-1}}{e} = \Fkt{\kappa_{k}}{e}$, a contradiction.
    Thus no cycle in $G_{g,k}$ contains a colour of $\kappa_{k}$ exactly once.
\end{proof}


Next, we routinely show that $G_{g,k}$ has large girth and chromatic number.

\begin{lemma}
    \label{lem:girth}
    For every pair of positive integers $g,k$, the graph $G_{g,k}$ has girth at least $g$.
\end{lemma}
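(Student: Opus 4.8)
The plan is to prove by induction on $k$ that $G_{g,k}$ has girth at least $g$, where the base case $G_{g,1}$ is a single vertex and hence vacuously has girth at least $g$. For the inductive step, assume $G_{g,k-1}$ has girth at least $g$, and let $C$ be a cycle in $G_{g,k}$. If $C$ lies entirely within one copy of $G_{g,k-1}$, we are done by induction, so assume $C$ passes through at least two copies (or uses at least one matching edge). As in the paragraph ``Cycles in the constructed graphs'', let $G_1,\dots,G_\ell$ be the copies of $G_{g,k-1}$ that $C$ passes through in order, let $P_i$ be the maximal subpath of $C$ in $G_i$, and let $\hyperwalk{H}{C}$ be the induced closed walk in $H = H_{\lceil g/3\rceil,k,r}$.

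The key observation is that the length of $\hyperwalk{H}{C}$ is a Berge closed walk in $H$ of length $\ell$ (the number of segments $P_i$), and each such closed walk of $H$ contains a Berge cycle of $H$ as a ``subwalk'' in the usual sense; since $H$ has girth at least $\lceil g/3 \rceil$, this forces $\ell \ge \lceil g/3\rceil$. Now I account for the length of $C$: between consecutive copies $G_i$ and $G_{i+1}$, the cycle $C$ traverses at least two matching edges (one leaving $G_i$ into $T$, one entering $G_{i+1}$ from $T$), and each segment $P_i$ contributes at least $0$ edges inside $G_i$ but the matching edges alone already give $2\ell$ edges. Actually a cleaner count: $C$ alternately uses matching edges and (possibly trivial) paths inside copies; the matching edges come in the pattern $G_i \to T \to G_{i+1}$, so there are exactly $2\ell$ matching edges on $C$ if we count each ``visit'' to a copy as flanked by two matching edges — hence $|C| \ge 2\ell \ge 2\lceil g/3\rceil \ge g$ for $g \ge 1$. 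One must be slightly careful that $\ell$ really is the length of a closed walk in $H$ with distinct consecutive hyperedges (so that it genuinely contains a Berge cycle), which is where the construction of $\hyperwalk{H}{C}$ and the definition of closed walk in \cref{sec:notation} are used; consecutive segments $P_i, P_{i+1}$ correspond to distinct hyperedges because $C$ genuinely moves from one copy to another.

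The main obstacle I anticipate is making the counting argument fully rigorous in the degenerate cases: when some $G_i$ is visited with a trivial path $P_i$ (a single vertex, no internal edge), and when the same copy of $G_{g,k-1}$ is visited multiple times by $C$ (so the $G_i$ are not distinct, but the closed walk in $H$ can still repeat hyperedges). One needs to confirm that $\hyperwalk{H}{C}$, as defined, is a legitimate closed walk — in particular that $F_i \ne F_{i+1}$ for consecutive indices and that the intermediate vertices are distinct — so that the girth bound $\ell \ge \lceil g/3 \rceil$ genuinely applies, and also that the factor $3$ (rather than $2$) in $\lceil g/3\rceil$ provides enough slack. The factor of $3$ presumably comes from the fact that a single ``step'' of the hyperwalk corresponds to one matching edge in, possibly one or more edges inside a copy, and one matching edge out, i.e.\ up to three edges of $C$ could in the worst relevant accounting be charged per hyperedge step; so $|C| \ge \text{(number of matching edges)} \ge 2\ell \ge 2\lceil g/3\rceil$, and for the girth-$g$ conclusion one checks $2\lceil g/3\rceil \ge g$ fails for large $g$ — so in fact the right bound must use all edges of $C$, giving $|C| \ge 3\ell$ roughly, and then $3\lceil g/3\rceil \ge g$. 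Pinning down exactly which edges get charged to which hyperedge step, and verifying the inequality $3\lceil g/3\rceil \ge g$, is the one genuinely delicate point; everything else is bookkeeping.
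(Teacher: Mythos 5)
Your proposal follows essentially the same route as the paper: induct on $k$, pass to the closed walk $\hyperwalk{H}{C}$, use the girth bound $\lceil g/3\rceil$ on $H$, and charge three edges of $C$ to each step of the walk. The one point you leave open closes easily: each vertex of a copy $\mu(F)$ is incident to exactly one matching edge (to its unique partner in $\nu^{-1}(F)$), so $C$ must enter and leave every copy at distinct vertices; hence each $P_i$ contributes at least one edge, giving $\Abs{C}\ge 3\ell\ge 3\lceil g/3\rceil\ge g$, and the same observation (a vertex of $T$ has at most one matching edge into any fixed copy) shows consecutive hyperedges of $\hyperwalk{H}{C}$ are distinct, so the walk is legitimate. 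Your intermediate bound $\Abs{C}\ge 2\ell$ is, as you note yourself, insufficient, so the final write-up should go directly to the $3\ell$ count.
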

\begin{proof}
    Clearly, the lemma holds for $k=1$ since $G_{g,1}$ is the single vertex graph.
    So, we proceed inductively on $k$ assuming that $G_{g,k-1}$ has girth at least $g$.
    Let $H = H_{\lceil g/3 \rceil,k,r}$ be the hypergraph from~\cref{thm:tranquil_hypergraphs} used in the last step of the construction of $G_{g,k}$.
    Note in particular that $H$ has girth at least $\Ceil{\nicefrac{g}{3}}$.
    Now let $C$ be a shortest cycle in $G_{g,k}$.
    If $C$ completely lies in a copy of $G_{g,k-1},$ then it has length at least $g$ by induction, so we may assume otherwise.
    Then $C$ induces the walk $\hyperwalk{H}{C}$ in $H$, which has length at least $\Ceil{\nicefrac{g}{3}}$, because $H$ has girth at least $\Ceil{\nicefrac{g}{3}}$.
    As the copies of $G_{g,k-1}$ are connected to $T$ by matchings, each edge in $\hyperwalk{H}{C}$ corresponds to a subpath of $C$ of length at least three (two matching edges and at least one edge in a copy of $G_{g,k}$).
    Thus $C$ is of length at least $3\cdot\Ceil{\nicefrac{g}{3}} \ge g$.
\end{proof}

\begin{lemma}
    \label{lem:chromatic_number}
    For every pair of positive integers $g,k$, we have $\chrom{G_{g,k}} \geq k$.
\end{lemma}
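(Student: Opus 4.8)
The plan is to run the classical Descartes--Tutte lower-bound argument for the chromatic number, with the hypergraph $H$ playing the role that the independent ``apex'' set plays in the original construction. I would argue by induction on $k$. The base case $k=1$ is immediate, since $G_{g,1}$ is a single vertex and so $\chrom{G_{g,1}}=1$. For the inductive step, assume $\chrom{G_{g,k-1}}\ge k-1$, put $r\coloneqq\Abs{G_{g,k-1}}$, and let $H=H_{\lceil g/3\rceil,k,r}$ be the $r$-uniform tranquil hypergraph with $\chrom{H}\ge k$ used in the last step of the construction of $G_{g,k}$, together with the bijections $\nu\colon T\to\V{H}$ and $\mu\colon\E{H}\to\mathcal{G}$ and the labellings $\lambda_F\colon F\to\{1,\dots,r\}$.

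Suppose for contradiction that $G_{g,k}$ admits a proper colouring $c$ with only $k-1$ colours. First I would transport $c$ restricted to $T$ over to $\V{H}$ via $\nu$: the map $v\mapsto c(\nu^{-1}(v))$ is a colouring of $\V{H}$ with at most $k-1$ colours, so since $\chrom{H}\ge k$ there is a hyperedge $F\in\E{H}$ that is monochromatic, say every vertex of $F$ gets colour $j$. Hence every vertex of $\nu^{-1}(F)\subseteq T$ has colour $j$ under $c$. Now I would look at the copy $\mu(F)$ of $G_{g,k-1}$: by the construction, each of its vertices $i\in\{1,\dots,r\}$ is joined by a matching edge to the vertex $\nu^{-1}(\lambda_F^{-1}(i))\in\nu^{-1}(F)$, which has colour $j$; since $c$ is proper, no vertex of $\mu(F)$ has colour $j$. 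Therefore $c$ restricted to $\mu(F)$ uses at most $k-2$ colours, contradicting the inductive hypothesis $\chrom{\mu(F)}=\chrom{G_{g,k-1}}\ge k-1$. This yields $\chrom{G_{g,k}}\ge k$.

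I do not expect a genuine obstacle here, as the argument is essentially routine and uses none of the girth, tranquility, or edge-colouring structure (those were needed for the previous lemmas). The one point that warrants a sentence of care is that the matching between $\nu^{-1}(F)$ and $\mu(F)$ is \emph{perfect} and so meets every vertex of the copy $\mu(F)$ --- this is exactly where $H$ being $r$-uniform with $r=\Abs{G_{g,k-1}}$ and each $\lambda_F$ being a bijection onto $\{1,\dots,r\}$ is used --- so that a single monochromatic hyperedge of $H$ is enough to forbid one colour throughout an entire copy of $G_{g,k-1}$. It is also worth noting explicitly that the subgraph of $G_{g,k}$ induced on the vertex set of $\mu(F)$ is precisely an isomorphic copy of $G_{g,k-1}$ (the only edges leaving $\mu(F)$ are matching edges to $T$), so that the inductive bound on $\chrom{\mu(F)}$ applies verbatim.
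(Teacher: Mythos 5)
Your proposal is correct and follows essentially the same route as the paper's proof: transport the $(k-1)$-colouring of $T$ to $V(H)$, find a monochromatic hyperedge $F$ via $\chi(H)\ge k$, and note that the perfect matching forbids that colour on all of $\mu(F)$, contradicting $\chi(G_{g,k-1})\ge k-1$. The extra remarks about the matching being perfect and $\mu(F)$ inducing a genuine copy of $G_{g,k-1}$ are sensible points of care that the paper leaves implicit.
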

\begin{proof}
    Clearly, the lemma holds for $k=1$.
    So, we proceed inductively on $k$ assuming that $\chi(G_{g,k-1})\ge k-1$.
    Suppose towards a contradiction that there was a proper colouring of $G_{g,k}$ with $k-1$ colours.
    Let again $H = H_{\lceil g/3 \rceil,k,r}$ be the hypergraph from~\cref{thm:tranquil_hypergraphs} used in the last step of the construction of $G_{g,k}$.
    As $H$ has chromatic number at least $k,$ there is a subset $I \subseteq T$ such that $F=\Fkt{\nu}{I}$ is an edge in $H$, and $F$ is monochromatic.
    As every vertex in $\Fkt{\mu}{F}$ has a neighbour in $F$ this colour cannot be used in $\Fkt{\mu}{F}$ and thus $\Fkt{\mu}{F}$ is coloured with $k-2$ colours, a contradiction.
\end{proof}

Together \cref{thm:no_lonely_colour}, \cref{lem:girth}, and \cref{lem:chromatic_number} imply our main result~\cref{thm:no_lonely_colour_graphs_large_chromatic_number}.

\section{Concluding Remarks}
\label{sec:conclusion}

We conclude with two open problems. First, we believe the following natural extension of \cref{thm:no_lonely_colour_graphs_large_chromatic_number} should be true as well.

\begin{conjecture}
    For every pair of positive integers $m,k$, there exists a graph with chromatic number at least $k$ that has a proper edge-colouring such that each cycle contains no colour between one and $m$ times.
\end{conjecture}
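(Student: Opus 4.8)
The plan is to re-run the inductive hyper-Tutte construction of \cref{sec:NLCG}, but maintaining the stronger invariant that $G_{g,k}$ carries a proper edge-colouring in which \emph{every cycle contains each colour either $0$ times or at least $m+1$ times} (the case $m=1$ being exactly \cref{thm:no_lonely_colour_graphs_large_chromatic_number}). Two ingredients of the construction have to be upgraded: the auxiliary hypergraphs, and the fresh colours placed on the matchings between the copies of $G_{g,k-1}$ and the independent set $T$. First I would pin down the right strengthening of \cref{lem:tranquil} and see exactly how much it buys; only then would I worry about the matchings.

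For the \emph{old} colours (those inherited from $\kappa_{k-1}$), I would replace ``tranquil'' by the following stronger property: call $H$ \emph{$m$-tranquil} if it admits labellings $\lambda$ such that for every closed walk $W$ the projection multigraph $\Projection{\lambda}{W}$ has no edge-cut of size at most $m$ --- equivalently, every edge of it stays on a cycle after the deletion of any $m-1$ other edges. Granting such an $H$, the argument in the proof of \cref{thm:no_lonely_colour} should go through almost verbatim: if a cycle $C$ of $G_{g,k}$ uses an old colour $c$ exactly $t$ times with $1\le t\le m$, pick one such edge $e$, lying in a subpath $P^{*}$ of $C$; using the absence of small edge-cuts, choose a cycle through the edge of $\Projection{\lambda}{\hyperwalk{H}{C}}$ contributed by $P^{*}$ that avoids the at most $m-1$ edges contributed by the subpaths carrying the other occurrences of $c$; lift this cycle to the Eulerian multigraph $\tilde G$ built from the relevant subpaths of $C$; find a cycle through $e$ in $\tilde G$ (which, by that choice, has no parallel copy there); and invoke the induction hypothesis to conclude that this last cycle --- all of whose edges lie on $C$ --- already carries at least $m+1$ edges of colour $c$, contradicting $t\le m$. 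The first genuine obstacle is then to \emph{construct} $m$-tranquil hypergraphs of large girth and chromatic number: the Gallai hypergraphs $C_d$ of \cref{lem:tranquil} do not obviously suffice once $m\ge 2$, since their projection multigraphs carry a positive integer circulation (the $r_{xy}$ of the proof balance across every edge-cut) and so can have $2$-edge-cuts. Finding hypergraphs of large girth and chromatic number whose projection multigraphs are genuinely highly connected --- or a substitute argument for the old colours --- is the first thing I would have to settle, perhaps by examining ``thickened'' or product-like variants of $C_d$.

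The \emph{main} obstacle, however, is the fresh matching colours. A cycle $C$ that enters and leaves a single copy $\mu(F)$ uses exactly two of the matching edges at $\mu(F)$, so the new colour of that matching occurs exactly twice on $C$ --- harmless for $m=1$, but forbidden as soon as $m\ge 2$. The obvious repairs all break: subdividing the matching edges into long paths would make the new colours accumulate (a long cycle has to traverse $\Omega(g)$ copies), but subdivided edges no longer propagate colour constraints, so the chromatic-number push of \cref{lem:chromatic_number} collapses, and a heavily subdivided graph is $3$-colourable anyway; a single shared colour for all matching edges collides at the high-degree vertices of $T$; a bounded shared palette only gives, by pigeonhole, that \emph{some} matching colour occurs $\ge m+1$ times on a long cycle, not that every occurring one does; and replacing each copy by a ``stack'' of copies does not force a cycle meeting the stack to descend into it. What is really needed is a device that simultaneously keeps every copy of $G_{g,k-1}$ \emph{directly} matched to $T$ (so that \cref{lem:chromatic_number} still applies) yet forces any cycle meeting a copy's matching to use each of its new colours at least $m+1$ times. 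I expect the resolution to involve recycling colours \emph{across} the levels of the recursion in a carefully synchronised fashion, or else a different chromatic-number-increasing gadget whose entry/exit ``doors'' cannot be crossed cheaply --- and I would spend the bulk of the effort there. Once such a gadget is in place, the girth bound should follow for free exactly as in \cref{lem:girth}.
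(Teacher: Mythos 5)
This statement is not proved in the paper: it appears in \cref{sec:conclusion} as an open conjecture, explicitly offered as a ``natural extension'' of \cref{thm:no_lonely_colour_graphs_large_chromatic_number} that the authors believe but do not establish. Your proposal is, by your own account, a plan with unresolved obstacles rather than a proof, and the obstacles you identify are genuine. The first gap is the existence of $m$-tranquil hypergraphs: the circulation argument in \cref{lem:tranquil} only shows that the net flow across any edge-cut of $\Projection{\lambda}{W}$ vanishes, which rules out cuts of size one but not of size two (two crossing edges can carry opposite flows), so the Gallai hypergraphs $C_d$ give no control beyond bridgelessness and you have no replacement construction. The second and, as you say, main gap is fatal to the approach as stated: any cycle that enters and exits a single copy $\mu(F)$ uses exactly two edges of that copy's fresh matching colour, so that colour occurs exactly twice, which already violates the conclusion for every $m\ge 2$. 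None of the repairs you canvass (subdivision, shared palettes, stacked copies) is shown to work, and you correctly note why each fails. So the proposal does not constitute a proof, and no comparison with a paper proof is possible since the paper offers none; what you have written is a reasonable but incomplete research programme for an open problem.

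One smaller point worth flagging: even for the old colours, your lifting step needs care. Avoiding the at most $m-1$ projection edges contributed by the other occurrences of colour $c$ requires that distinct occurrences of $c$ on $C$ sit in distinct subpaths $P_j$ contributing distinct edges of $\Projection{\lambda}{\hyperwalk{H}{C}}$, but several occurrences of $c$ may lie inside the \emph{same} subpath $P_j$, or the cycle you lift into $\tilde G$ may itself pass through subpaths carrying further occurrences of $c$ that you did not exclude. The induction hypothesis then gives you at least $m+1$ occurrences of $c$ on the lifted cycle in $G_{g,k-1}$, but those edges need not all be distinct edges of $C$ counted with the right multiplicity. This bookkeeping is resolvable in the $m=1$ case by the parallel-edge argument of \cref{thm:no_lonely_colour}, but for general $m$ it is an additional point you would need to argue.
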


Secondly, we would like to restate Babai's original conjecture on minimal Cayley graphs and, in particular, ask whether our construction for \cref{thm:no_lonely_colour_graphs_large_chromatic_number} can be modified to obtain a minimal Cayley graph disproving it.

\Babaiconjecture*

We remark that Babai also made the same conjecture for semi-minimal Cayley graphs.
Given a group $\Gamma$ and $S\subseteq \Gamma$ with $S=\{s_1, \ldots , s_n\}$, we say that $S$ is a \emph{semi-minimal} generating set for $\Gamma$ if $S$ generates $\Gamma$ and no $s_i$ is expressible in terms of $s_1, \ldots , s_{i-1}$, in this case we call $\text{Cay}(\Gamma, S)$ a semi-minimal Cayley graph.

\begin{conjecture}[Babai~\cite{babai1978chromatic}]
    There is a constant $c$ such that every semi-minimal Cayley graph has chromatic number at most $c$.
\end{conjecture}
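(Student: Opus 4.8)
We do not resolve this conjecture; we only describe the line of attack that seems most natural and the point at which it stalls. The plan would be to fix an ordered semi-minimal generating set $S=\{s_1,\dots,s_n\}$ of $\Gamma$ --- so that $\langle s_1\rangle\subsetneq\langle s_1,s_2\rangle\subsetneq\cdots\subsetneq\langle s_1,\dots,s_n\rangle=\Gamma$ --- and to bound $\chi(\text{Cay}(\Gamma,S))$ by an absolute constant by induction on $n$, peeling off the last generator, in the spirit of Garcia-Marco and Knauer's treatment of nilpotent and generalised dihedral groups. Put $\Gamma'=\langle s_1,\dots,s_{n-1}\rangle$; then $\{s_1,\dots,s_{n-1}\}$ with the inherited ordering is a semi-minimal generating set of the proper subgroup $\Gamma'$, so the inductive hypothesis would give $\chi(\text{Cay}(\Gamma',\{s_1,\dots,s_{n-1}\}))\le c$. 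The edge set of $\text{Cay}(\Gamma,S)$ splits into the edges with a label in $\{s_1,\dots,s_{n-1}\}$, whose connected components are copies of $\text{Cay}(\Gamma',\{s_1,\dots,s_{n-1}\})$ --- one on each left coset of $\Gamma'$, hence $c$-colourable by induction --- and the edges labelled $s_n$, whose components are cycles or bi-infinite paths, one on each left coset of $\langle s_n\rangle$. One would then hope to merge the inductive $c$-colouring of the first part with a bounded palette that repairs the second.

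The main obstacle --- and the reason the conjecture is open --- is that, unless $\Gamma'$ is normal in $\Gamma$, there is no product structure to lean on: the two coset foliations interleave in an essentially unconstrained way, a single $s_n$-cycle can thread through many distinct copies of $\text{Cay}(\Gamma',\{s_1,\dots,s_{n-1}\})$ in an irregular pattern with no global alignment of their colourings, so repairs made locally along an $s_n$-cycle need not be globally consistent, all the more so when $s_n$ has large or infinite order. Overcoming this appears to need a genuine classification of the finitely generated groups that can occur, together with a quantitative understanding of how an ordered generating set constrains the chromatic number of the resulting Cayley graph.

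Such an understanding is missing even in the minimal case: it is not known whether any minimal Cayley graph has chromatic number $5$. A natural first milestone is therefore to settle Babai's conjecture for minimal Cayley graphs --- restated just above --- or at least to decide whether chromatic number $5$ can occur there; the mechanism behind such a proof should reveal what extra leverage the ordering of the generators provides in the semi-minimal setting.
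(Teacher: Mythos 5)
This statement is an open conjecture of Babai that the paper merely restates; the paper offers no proof of it (indeed, the paper's contribution is a counterexample to a \emph{different} conjecture of Babai, while this one remains open). Your decision not to claim a proof is therefore the correct response, and your sketch of the coset-decomposition obstruction is consistent with what is known, including the Garcia-Marco--Knauer results for nilpotent and generalised dihedral groups cited in the paper.
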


Garcia-Marco and Knauer~\cite{garcia2024colouring} recently constructed a semi-minimal Cayley graph with chromatic number 7.

\section*{Acknowledgements}

This work was initiated at the 11th Annual Workshop on Geometry and Graphs, which was held at Bellairs Research Institute in March 2024. We are grateful to the organisers and participants for providing a stimulating research environment.
We thank Kolja Knauer for sharing the problem and for early discussions along with Freddie Illingworth.
The work was completed when the first and third author visited the second author at the NII in Tokyo, and we would like to thank the NII for their hospitality. We also thank Amir Yasuda for his encouragement on this project.

\bibliographystyle{alpha}
\bibliography{literature}

\end{document}